\documentclass[preprint]{elsarticle}

\usepackage{lineno}
\modulolinenumbers[5]
\usepackage{amsthm,amsmath}
\usepackage[colorlinks,citecolor=blue,urlcolor=blue]{hyperref}
\usepackage[utf8]{inputenc}
\usepackage{amsthm,amsmath,amsfonts,amssymb,  mathtools,multirow, bm, bbm}
\usepackage{caption} 
\usepackage{graphicx}
\usepackage{graphics}
\usepackage{algorithm}
\usepackage[normalem]{ulem}
\usepackage{algpseudocode}
\usepackage{epstopdf}
\usepackage{xcolor}
\usepackage{enumitem}
\captionsetup[table]{skip=10pt}

\DeclareMathOperator*{\argmin}{arg\,min}
\newcommand{\bX}{\boldsymbol{X}}

\newtheorem{theorem}{Theorem}[section]
\newtheorem{Assumption}{Assumption}
\newtheorem{lemma}{Lemma}
\newtheorem{definition}[theorem]{Definition} 
\newtheorem{proposition}{Proposition}
\def \cX{\mathcal{X}}
\def \cE{\mathcal{E}}
\def\bx{\boldsymbol{x}}
\def \P{\mathbb{P}}

\def \E{\mathbb{E}}
\def\bs{\mathbf}
\usepackage[margin=1in]{geometry}
\def\boxit#1{\vbox{\hrule\hbox{\vrule\kern6pt\vbox{\kern6pt#1\kern6pt}\kern6pt\vrule}\hrule}}

\date{\today}
\setlist[itemize]{leftmargin=10mm}
\usepackage{enumitem}
%\endlocaldefs

\begin{document}

\begin{frontmatter}
\title{Minimax Optimal High-Dimensional Classification using
Deep Neural Networks}
%\title{A sample article title with some additional note\thanksref{t1}}
%\runtitle{Minimax Optimal High-Dimensional DNN Classification}
%\thankstext{T1}{A sample additional note to the title.}
%\runauthor{Wang et al.}%M. Yuan and Z. Shang}
%\begin{aug}
%%%%%%%%%%%%%%%%%%%%%%%%%%%%%%%%%%%%%%%%%%%%%%
%%Only one address is permitted per author. %%
%%Only division, organization and e-mail is %%
%%included in the address.                  %%
%%Additional information can be included in %%
%%the Acknowledgments section if necessary. %%
%%%%%%%%%%%%%%%%%%%%%%%%%%%%%%%%%%%%%%%%%%%%%%
\author[A]{{Shuoyang} {Wang}}\ead{szw0100@auburn.edu}
%\author[B]{{Fangfang} {Wang}}\ead{fwang4@wpi.edu}
\author[C]{{Zuofeng} {Shang}\corref{mycorrespondingauthor}}
\cortext[mycorrespondingauthor]{Corresponding author. Full postal address: 323 Dr Martin Luther King Jr Blvd, Newark, NJ 07102.}
\ead{zshang@njit.edu}
%%%%%%%%%%%%%%%%%%%%%%%%%%%%%%%%%%%%%%%%%%%%%%
%% Addresses                                %%
%%%%%%%%%%%%%%%%%%%%%%%%%%%%%%%%%%%%%%%%%%%%%%
\address[A]{Department Mathematics and Statistics, Auburn University}

%\address[B]{Department of Mathematical Sciences, Worcester Polytechnic Institute}

\address[C]{Department of Mathematical Sciences, New Jersey Institute of Technology}
%\end{aug}

\begin{abstract}
High-dimensional classification is a fundamentally important research problem in high-dimensional data analysis.
In this paper, we derive nonasymptotic rate for the  minimax excess
misclassification risk when feature dimension exponentially diverges with the sample size and the Bayes classifier possesses a complicated modular structure.
We also show that classifiers based on deep neural network attain the above rate, hence, are minimax optimal.
%Numerical results based on simulated and real data are provided.
\end{abstract}

%\begin{keyword}[class=MSC2020]
%\kwd[Primary ]{62H30}
%\kwd[; Secondary ]{g2G99}
%\end{keyword}

\begin{keyword}
{high-dimensional classification}\sep {deep neural network}\sep {minimax excess misclassification risk} \sep {modular structure}
\MSC[2020] {62H30}\sep  {62G99}
\end{keyword}
\end{frontmatter}

\section{Introduction}
Classification in high-dimensional settings is a fundamental problem which has wide applications in disease classifications, document classification, image recognition, etc; see \cite{jfan2008aos, jfan2010frontier, hastie01statisticallearning}, and references therein. 
A central statistical task is to construct minimax optimal classifiers in high-dimensional settings, with special attention to the impact of dimensionality on the minimax excess misclassification risk. When $d$ is fixed,
there is a rich literature regarding construction of minimax optimal classifiers. For instance, \cite{Mammen:etal:99, Tsybakov:04, Tsybakov:09, Lecue:08, Galeano:etal:15, Farnia:Tse:16, Mazuelas:etal:20, hu2020arxiv, Kim:NN:2021}. 
In the high-dimensional regime $\log{d}=O(\textrm{poly}(n))$ in which $n$ represents number of training samples, this problem is more challenging and results are currently only available in specific modeling setups. For instance, when population densities are Gaussian and the difference of Gaussian mean vectors is sparse,
\cite{Cai:Zhang:19, Cai:Zhang:19b} showed that the discriminant analysis approaches are minimax optimal.
It is still unclear how to design optimal classifiers when data are high-dimensional non-Gaussian, a gap that the present paper attempts to close.

We consider binary classification problem. Let 
$Y$ be a binary variable generated from $\{-1, 1\}$ with unknown prior probabilities $\pi_p=\mathbb{P}(Y=1)$, $\pi_q=\mathbb{P}(Y=-1)$, and $\pi_p+\pi_q=1$.  Let $\bX\in\cX\equiv[0,1]^d$ 
be a $d$-dimensional random feature vector with class label $Y$ satisfying $\bX|Y=1\sim p, \bX|Y=-1\sim q$, where $p, q$ are unknown probability densities on $\cX$.
Let $\theta=(p,q,\pi_p,\pi_q)$ and $\eta_\theta(\bm x) = \mathbb{P}_\theta\left(Y=1|\bX=\bx\right), x\in\cX$, which is the conditional probability (under $\theta$) of assigning $\bX=\bx$ with label 1. 
The optimal Bayes classifier is then defined as
$C_\theta^*(\bm x) = 1$ if $\eta_\theta(\bm x)\geq \frac{1}{2}$, and $=-1$ otherwise,
which minimizes the misclassification risk $R_\theta(C)= \P_\theta(C(\bX)\neq Y)$ over all classifiers $C:\cX\rightarrow \{-1, 1\}$.
 Let $(\bX_1,Y_1),\ldots, (\bX_n,Y_n)$ be iid training samples following the same distribution as $(\bX,Y)$.
The specific aim of this paper is to design a classifier $\widehat{C}$ based on training samples
that mimics $C_\theta^*$ in the following minimax sense:
\begin{equation}\label{basic:minimax:prob}
   \sup_{\theta\in\Theta}\cE_\theta(\widehat{C})\asymp \inf_{\widehat{C}}\sup_{\theta\in\Theta}\cE_\theta(\widehat{C}),
\end{equation}
where $\cE_\theta(\widehat{C})=\E[R_\theta(\widehat{C})-R_\theta(C_\theta^{\ast})]$ is the excess misclassification risk of $\widehat{C}$, the infimum on the RHS of (\ref{basic:minimax:prob}) is taken over all classifiers based on training samples, and $\Theta$ is a proper space of $\theta$ to be described later.
Equivalently, one intends to design $\widehat{C}$ that achieves the minimax excess misclassification risk (MEMR), which possesses the ``best'' performance in the ``worst'' scenarios.

The success of discriminant analysis approaches hinges on the assumption that $p,q$ are both Gaussian. 
Note that, under Gaussianity, the log conditional probability $\log\eta_\theta$ is essentially a linear or quadratic polynomial 
which can be accurately approximated by
either linear or quadratic discriminant analysis.
When $p,q$ are high-dimensional non-Gaussian densities, $\eta_\theta$ typically has a complicated form,
hence, discriminant analysis no longer works.
Recently, \cite{Kim:NN:2021} proposed a classifier based on deep neural network (DNN)
that is proven minimax optimal, and
\cite{bos2021arxiv} proposed a DNN-based approach for multiclass classification and derived risk upper bound, both requiring fixed $d$.
It is still unclear whether DNN approaches are minimax optimal when $d$ is diverging and $p,q$ are non-Gaussian.

Our main contribution is to derive a nonasymptotic rate for (\ref{basic:minimax:prob}), as well as to show that DNN classifiers can attain this rate subject to a logarithmic order.
Specifically, we show that, when $\Theta$ consists of $\theta$ such that
$\log{\eta_\theta}$ (equivalently, the Bayes classifier) possess a modular structure, and $\log{d}=O(\textrm{poly}(n))$,
the rate of (\ref{basic:minimax:prob}) is $n^{-s_0}$ in which $s_0$
is a universal constant depending only on the noise exponent, the intrinsic dimensions and smoothness exponents of the modular components.
Moreover, the supremum of the misclassification risk of the DNN classifier 
is shown to be upper bounded by $n^{-s_0}$ multiplied by a power of $\log{n}$,
hence, is nearly minimax optimal.
Our results demonstrate the potential success of DNN classifiers in handling complex high-dimensional data.

The rest of the paper is structured as follows. In Section \ref{sec:dnn:classifier}, we review DNN classifiers. In Section \ref{sec:minimax}, we establish minimax optimality of DNN classifiers in high dimensions.
Technical proofs are provided in Section \ref{sec:appendix}.

\section{Deep neural network classifier and its minimax optimality}\label{sec:dnn:minimax}

In this section, we first review the DNN approach in Section \ref{sec:dnn:classifier}, and establish its minimax optimality in high-dimensional non-Gaussian regime in Section \ref{sec:minimax}.

\subsection{Deep neural network classifier}\label{sec:dnn:classifier}    
Let us review DNN classifiers in details. 
Let $\sigma$ denote the rectifier linear unit (ReLU) activation function defined as $\sigma(x)=(x)_+$ for $x\in\mathbb{R}$.
For real vectors $\bs{V}=(v_1,\ldots,v_w)^\top$ and $\bs{y}=(y_1,\ldots,y_w)^\top$, define the shift activation function $\sigma_{\bs{V}}(\bs{y})=(\sigma(y_1-v_1),\ldots,\sigma(y_w-v_w))^\top$.  
For $L\ge1$ and $\bs{p}=(p_1,\ldots,p_L)\in\mathbb{N}^L$, 
let $\mathcal{F}(L,\bs{p})$ denote the class of feedforward DNN with $d$ inputs, $L$ hidden layers and, for $l=1,\ldots,L$, $p_l$ nodes on the $l$th hidden layer.
Equivalently, any $f\in\mathcal{F}(L,\bs{p})$ has an expression
 \begin{equation} \label{EQ:f}
f(\bs x) = \mathbf{W}_L\sigma_{\bs{V}_L} \mathbf{W}_{L-1}\sigma_{\bs{V}_{L-1}}\ldots \mathbf{W}_1\sigma_{\bs{V}_1} \mathbf{W}_0\bs x, \,\,\,\, {\bs x}\in\mathbb{R}^d,
\end{equation}
where $\mathbf{W}_l\in\mathbb{R}^{p_{l+1}\times p_{l}}$, for $l=0,\ldots,L$, are weight matrices, $\bs{V}_l\in\mathbb{R}^{p_l}$, for $l=1,\ldots,L$,
are shift vectors. Here we have adopted the convention that $p_0=d$ and $p_{L+1}=1$.

To avoid overfitting, we adopt a common strategy that sparsifies the network parameters following \cite{Schmidt:19}.
Specifically, consider the following class of sparse DNN:
\begin{eqnarray}\label{EQ:class}
\hspace{3mm} \mathcal{F}(L, s, B, \bs{p})
& =& \left\{ f\in \mathcal{F}(L, \bm{p}) :  \max_{0\le l\le L}\| \mathbf{W}_l\|_{0}\le s, \max_{1\le l\le L}\|\mathbf{v}_l\|_{0} \leq s,\right.\\
 &&\left.  \max_{0\le l\le L}\| \mathbf{W}_l\|_{\infty}\le B, \max_{1\le l\le L}\|\mathbf{v}_l\|_{\infty} \leq B,   \|f\|_\infty\le B\right\}, \nonumber
\end{eqnarray}
where $ \| \cdot\|_{\infty}$ denotes the maximum-entry norm of a matrix/vector
or supnorm of a function, $\|\cdot\|_0$ denotes the number of non-zero elements in a vector or matrix, $s>0$ controls the number of non-zero parameters and  $B>0$ controls the largest weights and shifts and the supremum norm of DNN.
Let $\phi(x)=(1-x)_+$, $x\in\mathbb{R}$ be the hinge loss.
Following \cite{Kim:NN:2021}, we construct a classifier $\widehat{C}_{\phi,n}(\bX):=\textrm{sgn}(\widehat{f}_{\phi,n}(\bX))$ based on training samples,
where $\widehat{f}_{\phi,n}$ is a DNN obtained through minimizing the following empirical loss:
\vspace{-1mm}
\begin{equation}\label{surrogate:loss:hatf}
    \widehat{f}_{\phi, n}=\argmin_{f\in \mathcal{F}(L, s, B, \bs{p})}\frac{1}{n}\sum_{i=1}^n\phi(Y_if(\bX_i)).
    \vspace{-1mm}
    \end{equation} 
There are a few techniques for deep neural networks to prevent overfitting, such as $l_1$ regularization and dropout. In practice, these techniques are easy to implement in \texttt{R} and \texttt{Python}. For example, when applying dropout, the sparsity $s$ of DNN can be simply realized by controlling the dropout rate $\varrho$.  

For the training process, we suggest the following data-splitting method for selecting $\varrho$:
\begin{itemize}
    \item Step 1. Randomly assign the whole sample $(\bm X_i,Y_i)$'s to two subsets indexed by $\mathcal{I}_{train}$ and $\mathcal{I}_{test}$, respectively, with about $|\mathcal{I}_{train}|=0.7n$ and $|\mathcal{I}_{test}|=0.3n$.
    \item Step 2. For each $\varrho$, we train a DNN $\widehat{f}_{\varrho}$ using (\ref{surrogate:loss:hatf}) based on subset $\mathcal{I}_{train}$, and then calculate the testing error based on subset $\mathcal{I}_{test}$ as\vspace{-3mm}
\begin{equation}\label{testing:error}
\textrm{err}(\varrho)=\frac{1}{|\mathcal{I}_{test}|}\sum_{i\in\mathcal{I}_{test}}I(\widehat{f}_{\varrho}(\bm X_i Y_i<0).\vspace{-3mm}
\end{equation}
\item Step 3. Choose $\varrho$, possibly from a preselected set, to minimize $\textrm{err}(\varrho)$.
\end{itemize}    
    
It was shown by \cite{Kim:NN:2021} that the above $\widehat{C}_{\phi,n}$ can achieve the sharp rate of MEMR established by \cite{Audibert:Tsybakov:07} when $d$ is fixed.
When $d$ is diverging, this sharp rate is either too slow or even fails to converge to zero. In next section, we will rebuild the classic minimax theory to incorporate high dimensionality, as well as establish minimax optimality of $\widehat{C}_{\phi,n}$ in high-dimensional regime.

\subsection{Minimax optimality in high dimensions}\label{sec:minimax}
In this section, we establish the minimax optimality of the DNN classifier (\ref{surrogate:loss:hatf}) in high-dimensional regime.
We first introduce a proper parameter space for $\theta$, based on which our results will be established.

For $t\ge1$, $K,\beta>0$ and a measurable subset $D\subset\mathbb{R}^t$, 
define the ball of $\beta$-H\"{o}lder functions with radius $K$ as
\begin{eqnarray*}
\mathcal{C}_t^{\beta}(D, K)
=\left\{ f: D\mapsto\mathbb{R} : \sum_{\bm{\alpha}:|\bm{\alpha}|<\beta} \| \partial^{\bm{\alpha}}f\|_{\infty} + \sum_{\bm{\alpha}:|\bm{\alpha}|=\lfloor{\beta}\rfloor}\sup_{\bm{x}, \bm{x}' \in D, \bm{x} \neq \bm{x}' } \frac{|\partial^{\bm{\alpha}}f(\bm{x}) - \partial^{\mathbf{\alpha}}f(\bm{x}') |}{\|\bm{x} - \bm{x}'\|_{\infty}^{\beta - \lfloor\beta\rfloor}} \leq K \right\},
\end{eqnarray*}
where $\partial^{\bs{\alpha}}$ = $\partial^{\alpha_1}\ldots\partial^{\alpha_t}$ denotes the
partial differential operator with multi-index $\bs{\alpha}$ = $(\alpha_1, \ldots, \alpha_t) \in \mathbb{N}^t$, and $|\bs{\alpha}|=\alpha_1+\cdots+\alpha_t$.
For $q\ge1$, $\bm{d}= (d_1, \ldots, d_q)\in\mathbb{N}^q$,
$\bm{t}= (t_0, \ldots, t_q)\in\mathbb{N}^{q+1}$ with $t_0\le d$ and $t_u\le d_u$ for $u=1,\ldots,q$,  $\bm{\beta}= (\beta_0, \ldots, \beta_q)\in\mathbb{R}_+^{q+1}$, 
$\bm{a}=(a_1,\ldots,a_{q+1})\in\mathbb{R}^{q+1}$, $\bm{b}=(b_1,\ldots,b_{q+1})\in\mathbb{R}^{q+1}$ with $a_u<b_u$ for $u=1,\ldots,q+1$,
$\bm{K}= (K_0, \ldots, K_q)\in\mathbb{R}_+^{q+1}$, define $\mathcal{G}:=\mathcal{G}\left(q, \bm{d}, \bm{t}, \bm{\beta}, \bm{a}, \bm{b}, \bm{K} \right)$ to be the class of functions $g$ that has a modular expression
\begin{eqnarray}\label{EQ:gfunction}
 g(\bx)=g_q \circ \cdots \circ g_0(\bx),\,\,\,\,\bx\in\cX,
\end{eqnarray}
where, for $u=0,\ldots,q$, $g_u = (g_{u1},\ldots,g_{ud_{u+1}}) : \left[a_u, b_u \right]^{d_u} \mapsto \left[a_{u+1}, b_{u+1} \right]^{d_{u+1}}$
with $g_{uv} \in \mathcal{C}^{\beta_u}_{t_u}\left( \left[a_u, b_u\right]^{t_u}, K_u\right)$.
Note that each component $g_{uv}$ only relies on $t_u (\le d_u)$ variables, which implies that $g_u$ intrinsically depends on
small local clusters of the $d_u$ variables, namely, $g_u$ demonstrates a low-dimensional structure. Hence, $t_u$ can be viewed as an intrinsic dimension of $g_u$.
Structure (\ref{EQ:gfunction}) has been adopted by \cite{Schmidt:19, PR2018NIPS, wang:etal:21a, shang2021arxiv, Liu:etal:2021}
in multivariate regression using deep learning to address the ``curse of dimensionality.'' 
Examples of (\ref{EQ:gfunction}) include generalized additive model \cite{gam1990,shang2021jmaa}, tensor product space ANOVA model \cite{lin2000aos}, among others.
Specifically, the former corresponds to $g_{0}(\bm{x})=(f_j(x_j))_{j\in\mathcal{J}}$ with $f_1,\ldots,f_d$ being univariate smooth functions, $\mathcal{J}\subset\{1,\ldots,d\}$ being a set of $d_1$ indexes and $g_1(z_1,\ldots,z_{d_1})=z_1+\cdots+z_{d_1}$ so that $d_0=d$, $d_2=1$,
$t_0=1$, $t_1=d_1$; the latter corresponds to $g_0(\bm{x})=(f_{j_1}(x_{j_1})\cdots f_{j_r}(x_{j_r}))_{(j_1,\ldots,j_r)\in\mathcal{J}}$, with $\mathcal{J}\subset
\{(j_1,\ldots,j_r): 1\le j_1<\cdots<j_r\le d\}$ being a set of $d_1$ $r$-tuples, $1\le r<d$ being the order of interactions, and $g_1(z_1,\ldots,z_{d_1})=z_1+\cdots+z_{d_1}$, so that $d_0=d, d_2=1, t_0=r, t_1=d_1$. Besides above additivity and multiplicity, (\ref{EQ:gfunction}) also allows more general types of interactions
among the local variables.

Moreover, we make the following assumption on $\eta_\theta$.
\begin{Assumption}\label{A2}
There exist $\alpha\geq 0$ and $C_d=O(\log{d})$ such that 
\begin{equation}\label{eqn:A2}
    \P\left(\left|\eta_\theta(\bm X) - \frac{1}{2} \right| \le t\right)\le C_dt^{\alpha}, \;\;\;\;\;\;\forall t>0.
\end{equation}
\end{Assumption}
Assumption \ref{A2} is known as the \textit{noise condition}, which characterizes discrepancy between $\eta_\theta$ and 1/2 (random guess). 
When $d$ is fixed, so is $C_d$, 
Assumption \ref{A2} degenerates to the classic noise conditions considered in \cite{Mammen:etal:99, Tsybakov:04, Audibert:Tsybakov:07}.
When $d$ is diverging, we allow $C_d$ to diverge up to $\log{d}$ rate, which allows a potentially smaller gap between $\eta_\theta$ and 1/2.

Let $\Theta$ be the space of $\theta$ defined as follows: 
\begin{eqnarray*}
\Theta&\equiv&\Theta\left(q, \bm{d}, \bm{t}, \bm{\beta}, \bm{a}, \bm{b}, \bm{K}, C_d, \alpha, c\right) \\
&=& \left\{ \theta=( p, q, \pi_p, \pi_q): \textrm{$p,q$ are probability densities on $\mathcal{X}$ such that}\right.\\
&&\left.\textrm{$\eta_\theta\in \mathcal{G}$ and $\eta_\theta$ satisfies Assumption \ref{A2}}, c\le \pi_p, \pi_q\le 1-c, \pi_p+\pi_q=1 \right\},
\end{eqnarray*}
where $c\in\left(0,1/2\right]$ is a given constant.
Let 
\[
s_0=\min_{0\le u\le q}\frac{\beta_u^\ast(\alpha+1)}{\beta_u^\ast(\alpha+2) + t_u}, s_1=\max_{0\le u\le q}\frac{t_u}{\beta_u^\ast(\alpha+2) + t_u},
\]
where $\beta_u^{\ast}= \beta_u \prod_{k=u+1}^q (\beta_k \wedge 1)$.
The following result provides a nonasymptotic lower bound for MEMR. 

\begin{theorem}\label{THM: Lower bound}
There exists a positive constant $D_1$, depending on $q,  \bm{t}, \bm{\beta}, \bm{a}, \bm{b}, \bm{K}, \alpha, c$, such that
\[
\inf_{\widehat{C}}\sup_{\theta\in\Theta}\cE_\theta(\widehat{C})\ge D_1 C_d\left(\frac{\log d}{n}\right)^{s_0}\wedge 1,
\]
where the infimum is taken over all classifiers based on training samples.
\end{theorem}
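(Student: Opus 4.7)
The plan is to apply a Fano-type hypothesis-testing argument to a carefully chosen finite subfamily of $\Theta$, combining the standard bump construction of Audibert--Tsybakov with a coordinate-selection trick that supplies the $\log d$ factor. First I would locate the index $u^\ast\in\{0,\ldots,q\}$ realising the minimum that defines $s_0$, and fix every module $g_u$ with $u\neq u^\ast$ to a benign map (for example, a coordinate projection composed with a smooth rescaling) that lies safely inside its prescribed H\"older ball. Perturbing only $g_{u^\ast}$ then propagates to $\eta_\theta$ with effective smoothness $\beta_{u^\ast}^\ast=\beta_{u^\ast}\prod_{k>u^\ast}(\beta_k\wedge 1)$ and effective dimension $t_{u^\ast}$, which match the exponents appearing in $s_0$.

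For the bump family I would partition a subcube of $[a_{u^\ast},b_{u^\ast}]^{t_{u^\ast}}$ into $M$ disjoint cells of side $h\asymp M^{-1/t_{u^\ast}}$ and place on each cell a smooth $\beta_{u^\ast}$--H\"older bump $\varphi_k$ of unit supnorm. For $\omega\in\{0,1\}^M$ I would define
\[
\eta_\omega(\bx)=\tfrac{1}{2}+\delta h^{\beta_{u^\ast}^\ast}\sum_{k=1}^{M}(2\omega_k-1)\varphi_k\bigl(T_{u^\ast}(\bx)\bigr),
\]
where $T_{u^\ast}$ extracts the intrinsic $t_{u^\ast}$ arguments of the $u^\ast$-th module, and then set $p_\omega=2\eta_\omega\mu$ and $q_\omega=2(1-\eta_\omega)\mu$ for a fixed marginal $\mu$ on $\cX$ with $\pi_p=\pi_q=1/2$. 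The marginal $\mu$ is concentrated near the bumps and elsewhere on a set where $|\eta_\omega-1/2|$ is bounded away from zero, tuned so that Assumption~\ref{A2} holds with the prescribed $C_d=O(\log d)$. To supply the $\log d$ factor I would further enlarge the packing by varying \emph{which} $t_0$ of the $d$ input coordinates are fed into $g_0$: there are $\binom{d}{t_0}$ such arrangements, contributing $\gtrsim t_0\log d\asymp\log d$ additional bits without inflating pairwise KL divergences.

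Taking a Gilbert--Varshamov subset $\Omega\subset\{0,1\}^M$ with pairwise Hamming distance $\ge M/8$, the noise condition yields an excess-risk separation $\gtrsim C_d\,Mh^{t_{u^\ast}}(\delta h^{\beta_{u^\ast}^\ast})^{\alpha+1}$, while a direct $\chi^2$/Pinsker computation gives $\mathrm{KL}(P_{\theta_\omega}^{\otimes n}\Vert P_{\theta_{\omega'}}^{\otimes n})\lesssim nM\delta^2 h^{2\beta_{u^\ast}^\ast+t_{u^\ast}}$. Feeding these into Fano's inequality together with the $\log d$ bonus from the coordinate-selection step and calibrating
\[
h\asymp\Bigl(\tfrac{\log d}{n}\Bigr)^{1/(\beta_{u^\ast}^\ast(\alpha+2)+t_{u^\ast})},\qquad \delta h^{\beta_{u^\ast}^\ast}\asymp\Bigl(\tfrac{\log d}{n}\Bigr)^{\beta_{u^\ast}^\ast/(\beta_{u^\ast}^\ast(\alpha+2)+t_{u^\ast})},
\]
produces the announced separation of order $C_d(\log d/n)^{s_0}$. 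The clip at $1$ is automatic because excess misclassification risk is always bounded by $1$.

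The main obstacle will be verifying that each $\eta_\omega$ admits a modular representation inside $\mathcal{G}(q,\bm d,\bm t,\bm\beta,\bm a,\bm b,\bm K)$ with the \emph{nominal} constants (so that $\theta_\omega\in\Theta$), while simultaneously arranging the marginal $\mu$ to realise Assumption~\ref{A2} with the sharp prefactor $C_d$ rather than only a generic constant. In particular, one must certify that pushing each bump through the fixed outer modules yields a function whose effective H\"older exponent is $\beta_{u^\ast}^\ast$ (this is where the $\beta_k\wedge 1$ products enter), and that the coordinate-permutation step respects the intrinsic-dimension constraints $t_u\le d_u$ and the ranges $[a_u,b_u]$. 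Once these structural verifications are in place, the remainder is a routine Assouad/Fano computation.
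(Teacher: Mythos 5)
Your construction skeleton coincides with the paper's: isolate the worst module $u^\ast$, make the remaining modules benign (the paper takes identity maps below $u^\ast$ and $g_u(x_1,\ldots,x_{d_u})=(x_1^{\beta_u\wedge 1},0,\ldots,0)$ above $u^\ast$, which is precisely the certification of the effective exponent $\beta^\ast_{u^\ast}$ that you flag as an obstacle), and perturb $\eta_\theta$ by an Audibert--Tsybakov grid of signed bumps with a margin-calibrated marginal. The paper then follows Theorem 4.1 of Audibert and Tsybakov directly, producing the $C_d$ and $\log d$ factors through its choices of the grid resolution $\nu$, the per-cell mass $w$, and the number of perturbed cells $m\propto C_d$; it never varies which of the $d$ input coordinates are active. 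Your coordinate-selection-plus-Fano device is therefore a genuinely different mechanism for generating the $\log d$ factor, and this is where the proposal breaks down.

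Concretely, your two key displays are mutually inconsistent, and once reconciled the calibration fails Fano. Write $A=\delta h^{\beta^\ast_{u^\ast}}$ and $t=t_{u^\ast}$. To reach a separation of order $C_d(\log d/n)^{s_0}=C_dA^{\alpha+1}$ you need $M\asymp h^{-t}$ cells each carrying mass of order $C_dh^{t}A^{\alpha}$ (this is what your separation bound $C_dMh^{t}A^{\alpha+1}$ implicitly assumes), but then the pairwise KL of the $n$-sample laws is of order $n\,M\,C_dh^{t}A^{\alpha+2}\asymp nC_dA^{\alpha+2}\asymp C_d\,M\log d$, not the $nM\delta^2h^{2\beta+t}$ you wrote (which uses the unweighted cell mass $h^{t}$). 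Meanwhile the log-cardinality of your hypothesis family is only $O(M+t_0\log d)$: Gilbert--Varshamov contributes $\asymp M$ bits and the $\binom{d}{t_0}$ support choices contribute only $t_0\log d$ bits, and they do \emph{not} shrink pairwise KL --- two hypotheses built on different coordinate subsets are as far apart in KL as same-support ones, since KL is still of order $n$ times (perturbed mass) times $A^2$. So Fano's condition is violated by a factor of order $C_d\log d$; restoring it forces you to deflate the cell masses by that factor, and the separation you can certify falls to the order $C_d(C_dn)^{-s_0}$, or to a term with exponent $(\alpha+1)/(\alpha+2)$ in $\log d/n$ when the $t_0\log d$ bits dominate --- in either case strictly short of $C_d(\log d/n)^{s_0}$. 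In other words, the variable-selection entropy alone cannot convert the fixed-$d$ rate $n^{-s_0}$ into $(\log d/n)^{s_0}$; the paper instead builds the dimension dependence into the choice of $\nu$, $w$, $m$ and the margin constant $C_d$ inside the Audibert--Tsybakov scheme, and your argument as written does not reach the stated bound.
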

The following theorem further derives a nonasymptotic upper bound for the excess misclassification risk of $\widehat{C}_{\phi,n}$.
\begin{theorem}\label{THM: Upper bound}
Suppose $C_d^{1/s_0}\log d=O\left(n\log^{-2}{n}\right)$ and
the network class $\mathcal{F}(L,  s, B, \bm{p})$ satisfies 
\begin{enumerate}[label=(\roman*)]
  \item\label{cond:i}  $L\asymp \log n$;
  \item\label{cond:ii} $ \max_{0\leq \ell \leq L} p_\ell\asymp \max\left\{d, \left( \frac{n}{\log^2n(\log n+\log d)}\right)^{s_1} \right\}$;
  \item\label{cond:iii}  $s\asymp \left( \frac{n}{\log^2n(\log n+\log d)}\right)^{s_1}\log n$;
  \item\label{cond:iv}   $B\asymp  \left( \frac{n}{\log^2n(\log n+\log d)}\right)^{\frac{s_0}{\alpha+1}}$.
\end{enumerate}
Then there exists a constant $D_2$, depending on $q,  \bm{t}, \bm{\beta}, \bm{a}, \bm{b}, \bm{K}, \alpha, c$, such that
\[
\sup_{\theta\in\Theta} \cE_\theta(\widehat{C}_{\phi,n})\le D_2 C_d\left(\frac{\log^3n+\log^2{n}\log d }{n}\right)^{s_0}.
\]
%where $J^\ast = (n/\log^4 n)^{1/(1+\nu_2)}$.
\end{theorem}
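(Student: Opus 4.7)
\textbf{Proof plan for Theorem \ref{THM: Upper bound}.}
The overall strategy combines the calibration inequality for the hinge loss with an approximation--estimation decomposition, executed so that the ambient dimension $d$ enters only through multiplicative $\log d$ factors. First, I would reduce excess classification risk to excess hinge risk via Zhang's inequality: writing $R_{\phi,\theta}(f)=\E_\theta[\phi(Yf(\bX))]$ and $f_\phi^{\ast}(\bx)=\mathrm{sgn}(2\eta_\theta(\bx)-1)$, one has $R_\theta(\mathrm{sgn}(f))-R_\theta(C_\theta^{\ast})\le R_{\phi,\theta}(f)-R_{\phi,\theta}(f_\phi^{\ast})$ for every measurable $f$, so it suffices to bound $\E[R_{\phi,\theta}(\widehat f_{\phi,n})-R_{\phi,\theta}(f_\phi^{\ast})]$. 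Oracle manipulation of the ERM in (\ref{surrogate:loss:hatf}) then gives, for any $\tilde f\in\mathcal{F}(L,s,B,\bm p)$,
\[
\E\bigl[R_{\phi,\theta}(\widehat f_{\phi,n})-R_{\phi,\theta}(f_\phi^{\ast})\bigr]\le \underbrace{R_{\phi,\theta}(\tilde f)-R_{\phi,\theta}(f_\phi^{\ast})}_{\text{approximation}}+\underbrace{2\,\E\Bigl[\sup_{f\in\mathcal{F}(L,s,B,\bm p)}\bigl|R_{\phi,\theta}(f)-\widehat R_{\phi,n}(f)\bigr|\Bigr]}_{\text{stochastic}},
\]
which I would control separately.

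For the approximation term, I would build $\tilde f$ from a DNN approximation to $\eta_\theta$ rather than attempt to approximate the discontinuous $f_\phi^{\ast}$ directly. Since $\eta_\theta\in\mathcal{G}$, the compositional approximation theorem of \cite{Schmidt:19} produces $\tilde\eta\in\mathcal{F}(L,s,B,\bm p)$ with $\|\tilde\eta-\eta_\theta\|_\infty\lesssim \delta_n:=(n/[\log^2n(\log n+\log d)])^{-s_0/(\alpha+1)}$ under the architecture \ref{cond:i}--\ref{cond:iv}, with $d$ entering only the width and a single $\log d$ factor. I would then set $\tilde f=h_{\delta_n}(2\tilde\eta-1)$, where $h_\epsilon(t)=\mathrm{sgn}(t)\min(|t|/\epsilon,1)$ is piecewise linear and implementable by a constant number of extra ReLU layers without inflating $(L,s,B,\bm p)$. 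Starting from the identity
\[
R_{\phi,\theta}(f)-R_{\phi,\theta}(f_\phi^{\ast})=\E\bigl[|2\eta_\theta(\bX)-1|\bigl(1-f(\bX)\,\mathrm{sgn}(2\eta_\theta(\bX)-1)\bigr)\bigr]
\]
valid for $[-1,1]$-valued $f$, the construction forces the integrand to vanish on $\{|2\eta_\theta-1|>3\delta_n\}$ and to be bounded by $2$ otherwise, so Assumption \ref{A2} yields $R_{\phi,\theta}(\tilde f)-R_{\phi,\theta}(f_\phi^{\ast})\lesssim \delta_n\cdot C_d\delta_n^{\alpha}=C_d\delta_n^{\alpha+1}=C_d((\log^3n+\log^2n\log d)/n)^{s_0}$.

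For the stochastic term, I would apply a local Bernstein--Talagrand concentration inequality uniformly over $\mathcal{F}(L,s,B,\bm p)$ together with the sparse-network covering-number estimate $\log\mathcal{N}(\delta,\mathcal{F}(L,s,B,\bm p),\|\cdot\|_\infty)\lesssim sL\log(sLB/\delta)$ from \cite{Schmidt:19}. The crucial variance bound
\[
\mathrm{Var}\bigl[\phi(Yf(\bX))-\phi(Yf_\phi^{\ast}(\bX))\bigr]\lesssim C_d^{1/(\alpha+1)}\bigl(R_{\phi,\theta}(f)-R_{\phi,\theta}(f_\phi^{\ast})\bigr)^{\alpha/(\alpha+1)}
\]
follows by writing $|\phi(Yf)-\phi(Yf_\phi^{\ast})|\le|f-f_\phi^{\ast}|$, splitting $\E[(f-f_\phi^{\ast})^2]$ across $\{|2\eta_\theta-1|\le t\}$ and its complement, and optimizing over $t$ using Assumption \ref{A2}. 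Plugging this into Bernstein produces a fixed-point equation whose solution, under the choices \ref{cond:i}--\ref{cond:iv} and the standing condition $C_d^{1/s_0}\log d=O(n\log^{-2}n)$, is of the same $C_d(\log^3n+\log^2n\log d)^{s_0}/n^{s_0}$ order as the approximation error; summing the two terms and taking $\sup_{\theta\in\Theta}$ yields the theorem.

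The main obstacle is executing the Schmidt-Hieber approximation so that the exponent $s_0/(\alpha+1)$ depends only on the intrinsic dimensions $(t_u)$ and composite smoothness $(\beta_u^{\ast})$, and $d$ enters only through the width $\max_\ell p_\ell\gtrsim d$ and a single $\log d$ factor; this is what keeps the final rate free of any power of $d$, and is engineered by composing dimension-free layer approximators that act on the $t_u$ relevant coordinates at each level, forcing the sparsity $s$, width, and weight bound $B$ to the stated orders. A secondary technical point is the Bernstein variance-excess inequality above: without the Tsybakov-based improvement, a Hoeffding bound would yield only a suboptimal exponent in $n$, and the hypothesis $C_d^{1/s_0}\log d=O(n\log^{-2}n)$ is consumed precisely to place the localized process in the regime where the Bernstein square-root term dominates the bulk deviation.
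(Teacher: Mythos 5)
Your outline is essentially the paper's own strategy, just with the estimation step unpacked by hand. The approximation half is virtually identical to Lemma \ref{LEM:approx regression function}: Schmidt--Hieber's compositional approximation of $\eta_\theta$, followed by a piecewise-linear clip of $2\tilde\eta-1$ at scale $\epsilon$ realized by two extra ReLU units with weights of size $\epsilon^{-1}$ (which is exactly why $B$ must grow as in \ref{cond:iv}), then hinge calibration (Theorem 2.31 of Steinwart--Christmann in the paper, Zhang's inequality in your write-up) and Assumption \ref{A2} to get the $C_d\epsilon^{\alpha+1}$ bound. For the estimation half the paper does not redo the localization: it computes the covering/bracketing entropy of the sparse network class (Lemmas \ref{LEM: Covering} and \ref{LEM: covering class}) and then invokes Theorem A.2 of Kim et al.\ (Lemma \ref{LEM: sieve}), which already contains the variance--excess-risk localization and outputs the fixed-point rate $(\max\{\delta_n^\ast,\delta_n\})^2$; balancing $\delta^2=\epsilon^{\alpha+1}$ against the entropy condition gives the stated rate. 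Your plan to re-derive that step via a local Bernstein/Talagrand argument with the variance bound $\mathrm{Var}[\phi(Yf)-\phi(Yf_\phi^\ast)]\lesssim C_d^{1/(\alpha+1)}(R_{\phi,\theta}(f)-R_{\phi,\theta}(f_\phi^\ast))^{\alpha/(\alpha+1)}$ is the standard content of that cited theorem, so the two routes buy the same thing; the paper's route is shorter because the empirical-process work is outsourced.

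Two cautions if you were to execute your version. First, the displayed oracle inequality with the \emph{global} term $2\,\E[\sup_f|R_{\phi,\theta}(f)-\widehat R_{\phi,n}(f)|]$ cannot be the operative decomposition: with entropy of order $sL\log(\cdot)$ and hinge losses bounded only by $1+B$ with $B$ diverging under \ref{cond:iv}, that term does not shrink at the rate $n^{-s_0}$ (already for $\alpha=0$ it fails to vanish), so the localized fixed-point argument must \emph{replace} the global supremum rather than sharpen it -- which is precisely what Lemma \ref{LEM: sieve} provides. Second, your variance--excess-risk inequality is derived for functions bounded by a constant; since the class here has $\|f\|_\infty\le B\to\infty$, you must first pass to the clipped predictor (clipping to $[-1,1]$ never increases the hinge loss and preserves the sign, hence the classifier), or the variance bound picks up factors of $B$ that would spoil the exponent. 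With those repairs your argument matches the paper's conclusion, including the $\log d$ bookkeeping through the entropy bound of Lemma \ref{LEM: covering class} and the standing condition $C_d^{1/s_0}\log d=O(n\log^{-2}n)$.
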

Combining Theorems \ref{THM: Lower bound} and \ref{THM: Upper bound}, we have established a nonasymptotic (nearly) sharp rate for MEMR as well as the minimax optimality of $\widehat{C}_{\phi,n}$. In particular, if $\log{d}\asymp n^\varepsilon$ for $0<\varepsilon<s_0/(s_0+1)$ and $C_d\asymp \log{d}$,
then we have
\begin{equation}\label{eq:example}
D_1C_dn^{-(1-\varepsilon)s_0}\le\inf_{\widehat{C}}\sup_{\theta\in\Theta}\cE_\theta(\widehat{C})\le D_2C_dn^{-(1-\varepsilon)s_0}\log^{2s_0}{n},
\end{equation}
which can be achieved by $\widehat{C}=\widehat{C}_{\phi,n}$ based on a proper selection of network architectures, i.e., conditions \ref{cond:i}-\ref{cond:iv}. When $\beta_u^\ast\equiv\beta$ and $t_u\equiv t$, we get $s_0=\frac{\beta(\alpha+1)}{\beta(\alpha+2)+t}$. Though $d$ is large, the rate (\ref{eq:example}) is sufficiently rapid since $s_0$ only involves the intrinsic dimension
$t$ (rather than $d$).

To the end of this section, we comment on the parameter space $\Theta$, the feasibility of Assumption \ref{A2}, as well as the conditions \ref{cond:i}-\ref{cond:iv} on selection of network architecture.
Note that 
\[
\eta_\theta(\bx)=\frac{p(\bm{x})\pi_p}{p(\bm{x})\pi_p+q(\bm{x})\pi_q}=\frac{1}{1+\frac{\pi_q}{\pi_p}\cdot\frac{q(\bm{x})}{p(\bm{x})}},\,\,\bm{x}\in\cX.
\]
Therefore, $\eta_\theta\in\mathcal{G}$ is equivalent to $q/p\in\mathcal{G}$,
implying that the likelihood ratio of the two population densities hinges on low-dimensional structures.
In other words, $q/p$ only involves finitely many low order interactions among local variables.
Conceptually, this is similar to the sparsity assumption by \cite{Cai:Zhang:19, Cai:Zhang:19b} in high-dimensional Gaussian scenario.
To see this, suppose $p$ and $q$ are $d$-variate Gaussian with mean vectors $\mu_p, \mu_q$ and covariance matrices $\Sigma_p,\Sigma_q$, respectively.
Then $q/p\in\mathcal{G}$ implies that $\|\Sigma_p^{-1}- \Sigma_q^{-1}\|_0$ and $\|\Sigma_p^{-1}\left(\mu_p-\mu_q\right)\|_0$ are both bounded by a finite number,
i.e., the difference of mean vectors and precision matrices are both sparse.

The following result provides a sufficient condition for Assumption \ref{A2}.
\begin{proposition}\label{prop:verify:A2}
Suppose there exist $0<\delta<1$, $0\leq\tau<1$ and $M_d=O(\log{d})$ such that
  \begin{equation}\label{prop:cond}
      \sup_{r:|r-\pi_p/\pi_q|\le\delta}f_{q/p}(r)|r-\pi_p/\pi_q|^\tau\le M_d,
  \end{equation}
where $f_{q/p}$ is the probability density function of $q(\bm x)/p(\bm x)$. Then Assumption 
\ref{A2} holds with $\alpha=1-\tau$ and $C_d=\max\{ C_{\tau,c, \delta}^{(1)}M_d, C_{\tau,c, \delta}^{(2)}\}$,
where $C_{\tau,c, \delta}^{(1)} =\frac{2^{2-\tau}}{1-\tau}(\frac{2(1-c) + c\delta}{c})^{1-\tau}$, $C_{\tau,c, \delta}^{(2)}= (\frac{4-2c}{c\delta})^{1-\tau}$, and $c=\min\{\pi_p,1-\pi_p\}$.
\end{proposition}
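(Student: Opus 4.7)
The plan is to translate the noise condition (\ref{eqn:A2}) into a tail condition on the likelihood ratio $R := q(\bm X)/p(\bm X)$ around its critical value $\rho := \pi_p/\pi_q$, then invoke the density bound (\ref{prop:cond}). Starting from $\eta_\theta(\bm x)=\{1+(\pi_q/\pi_p)(q/p)(\bm x)\}^{-1}$, an elementary computation gives the clean identity
$$|\eta_\theta(\bm X)-1/2|=\frac{|R-\rho|}{2(R+\rho)},$$
so for any $t<1/2$ the event $\{|\eta_\theta(\bm X)-1/2|\le t\}$ is equivalent to $|R-\rho|\le 2t(R+\rho)$. Treating the branches $R>\rho$ and $R<\rho$ separately and isolating $R$ yields the uniform inclusion
$$\{|\eta_\theta(\bm X)-1/2|\le t\}\subseteq\{|R-\rho|\le 4t\rho/(1-2t)\},$$
which eliminates $R$ from the right-hand side and reduces the problem to a one-dimensional tail probability for $R$ near $\rho$.

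I would then split on whether $t\le t_\ast:=\delta/(2(2\rho+\delta))$. In this regime a short calculation shows $4t\rho/(1-2t)\le 2t(2\rho+\delta)\le\delta$, so the symmetric interval around $\rho$ of half-length $4t\rho/(1-2t)$ lies inside the window on which (\ref{prop:cond}) provides the pointwise bound $f_{q/p}(r)\le M_d|r-\rho|^{-\tau}$. Integrating,
$$\P\bigl(|R-\rho|\le 4t\rho/(1-2t)\bigr)\le \frac{2M_d}{1-\tau}\bigl(2t(2\rho+\delta)\bigr)^{1-\tau}\le C_{\tau,c,\delta}^{(1)}M_d\,t^{1-\tau},$$
where the last step absorbs $\rho$ via the crude bound $\rho\le(1-c)/c$, itself an immediate consequence of $\pi_p,\pi_q\in[c,1-c]$.

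In the complementary range $t>t_\ast$ (which automatically covers $t\ge 1/2$), I use the trivial bound $\P(|\eta_\theta(\bm X)-1/2|\le t)\le 1$ and verify numerically that $1\le(2(2\rho+\delta)/\delta)^{1-\tau}t^{1-\tau}\le C_{\tau,c,\delta}^{(2)}\,t^{1-\tau}$; the second inequality reduces to $(4-2c)/c\ge 4\rho+2\delta$, which is immediate from $\rho\le(1-c)/c$ together with $\delta<1$. Combining the two cases yields the stated bound with $\alpha=1-\tau$ and $C_d=\max\{C_{\tau,c,\delta}^{(1)}M_d,\,C_{\tau,c,\delta}^{(2)}\}$, and the hypothesis $M_d=O(\log d)$ then forces $C_d=O(\log d)$ as required.

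The only mildly delicate step is the algebraic reduction producing $|R-\rho|\le 4t\rho/(1-2t)$: it is not a single chain of inequalities but requires handling $R>\rho$ and $R<\rho$ separately, in each branch isolating $R$ and reading off the extremal deviation from $\rho$. Once this reduction is in hand, the remainder is bookkeeping to check the constants $C_{\tau,c,\delta}^{(1)}, C_{\tau,c,\delta}^{(2)}$ absorb the dependence on $\rho$ uniformly over $(\pi_p,\pi_q)\in[c,1-c]^2$, with no analytic difficulty beyond integrating a power.
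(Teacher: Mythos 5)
Your proposal is correct and follows essentially the same route as the paper: reduce the event $\{|\eta_\theta(\bm X)-\tfrac12|\le t\}$ to a one-dimensional window $|q/p(\bm X)-\pi_p/\pi_q|\lesssim \tfrac{4t}{1-2t}$, integrate the bound $f_{q/p}(r)\le M_d|r-\pi_p/\pi_q|^{-\tau}$ for small $t$ (where the window stays inside the $\delta$-neighborhood), use the trivial bound $\P\le 1$ for large $t$, and absorb $\pi_p/\pi_q\le(1-c)/c$ into the same constants $C_{\tau,c,\delta}^{(1)}$, $C_{\tau,c,\delta}^{(2)}$. The only cosmetic differences are your exact identity $|\eta_\theta-\tfrac12|=|R-\rho|/(2(R+\rho))$ with a symmetric window and a $\rho$-dependent case-split threshold, versus the paper's $c$-dependent threshold; both yield identical constants.
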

Since $\{\bm{x}: q(\bm{x})/p(\bm{x})=\pi_p/\pi_q\}$ defines the decision boundary, the supremum (\ref{prop:cond}) of $f_{q/p}(r)$ when $r$ belongs to a $\delta$-neighborhood of $\pi_p/\pi_q$
characterizes the maximum density near the decision boundary. Intuitively, the classification task becomes more challenging when this maximum density becomes larger. Proposition \ref{prop:verify:A2} shows that Assumption \ref{A2} is satisfied provided that this maximum density is suitably upper bounded. The upper bound is very mild in that it may even diverge to infinity near $\pi_p/\pi_q$ in a suitable speed. 
Therefore, (\ref{prop:cond}) is a mild regularity condition on the maximum density of $q/p$ nearby decision boundary which is proven to imply Assumption \ref{A2}.

Conditions \ref{cond:i}-\ref{cond:iv} impose exact orders on the network architecture.
It should be mentioned that, with more tedious arguments, they can be generalized to proper ranges of architecture parameters which still guarantee the same conclusions of Theorem \ref{THM: Upper bound}.

\section{Proofs of Theorems \ref{THM: Lower bound} and \ref{THM: Upper bound}}\label{sec:appendix}
\subsection{Proof of Theorem \ref{THM: Lower bound}}
\begin{proof}

Let $u^\ast=\argmin_{u=0, \ldots, q}\frac{\beta_u^\ast(\alpha+1)}{\beta_u^\ast(\alpha+2)+t_u}$, $\beta^\ast=\beta_{u^\ast}$, $t^\ast=t_{u^\ast}$,  $\beta^{\ast\ast}=\beta_{u^\ast}^\ast$ and $\widetilde{\beta}= \prod_{k=u^\ast+1}^q (\beta_k \wedge 1)$. {Without loss of generality, we assume $u^\ast$ is unique.}

For an integer $\nu \geq 1$, define the regular grid on $\mathbb{R}^{t^\ast}$ as 
$$G_\nu=\left\{ \left( \frac{2k_1+1}{2\nu}, \ldots, \frac{2k_{t^\ast}+1}{2\nu}\right): k_\ell\in\{ 0, \ldots, \nu-1\}, \ell=1, \ldots, t^\ast\right\}.$$
Let $n_\nu(\bm x) \in G_\nu$ be the closest point to $\bm x\in \mathbb{R}^{t^\ast}$ among points in $G_\nu$. Let $\mathcal{X}_\ell, \ell=0, \ldots, m$ be the partition of $\mathbb{R}^{t^\ast}$ defined in the proof of Theorem 4.1 in \cite{Audibert:Tsybakov:07}, where $m\leq \nu^{t^\ast}$.

Let $\gamma: \mathbb{R}^+\rightarrow\mathbb{R}^+$ be a nonincreasing infinitely differentiable function such that $\gamma=1$ on $\left[ 0, 1/4\right]$ and $\gamma=0$ on $\left[ 1/2, \infty\right)$. For instance, $\gamma$ can be constructed as in \cite{Audibert:Tsybakov:07}: $\gamma(x)=\left( \int_{1/4}^{1/2}\gamma_1(s)ds\right)^{-1}\int_x^\infty \gamma_1(t)dt$, where
$$\gamma_1(x) = \left\{ \begin{array}{ll} \exp\left\{ \frac{1}{(x-1/4)(x-1/2)}\right\}, \quad \quad x \in (1/4, 1/2), \\ 0, \quad \quad\quad\quad\quad\quad\quad\quad\quad\quad\quad \text{otherwise.} \end{array} \right.$$
Let $h_{u^\ast}:\mathbb{R}^{t^\ast}\rightarrow\mathbb{R}^+$ be a function defined as  $h_{u^\ast}(\bm x)=\nu^{-\beta^\ast}C_{\gamma}\gamma(\|\nu(\bm x-n_\nu(\bm x))\|)$,  where $D^sh_{u^\ast}(\bm x) = \nu^{|s|-\beta^\ast}C_\gamma D^s \gamma(\|\nu(\bm x-n_\nu(\bm x))\|)$ for any $s\in \mathbb{N}^{t^\ast}$ such that $|s|\leq \lceil\beta^\ast\rceil$, and $C_\gamma$ is a constant small enough to ensure $h_{u^\ast}\in \mathcal{C}_{t^\ast}^{\beta^\ast}\left(\mathbb{R}^{t^\ast},  K^\ast\right)$ for a constant $K^\ast>0$. Here, we require $C_\gamma$ being small so that $h_{u^\ast}$ has Lipschitz constant $K^\ast$.

In the following, we construct a special composition function based on $h_{u^\ast}$. For $\sigma:=\left( \sigma_1, \ldots, \sigma_m\right)\in \{-1, 1\}^m$, let $h(\bm x) = \sum_{j=1}^m \sigma_j h_j(\bm x)$ such that $h_j(\bm x) = h_{u^\ast} \mathbb{I}\left( \bm x \in \mathcal{X}_j\right)$. It is easy to verify that $h\in \mathcal{C}_{t^\ast}^{\beta^\ast}\left(\mathbb{R}^{t^\ast},  2K^\ast\right)$. Define the following functions 
$$\left\{ \begin{array}{lll} g_u(x_1, \ldots, x_{d_u})=(x_1, \ldots, x_{d_u}), \quad u<u^\ast, \\ g_u(x_1, \ldots, x_{d_u})=\left(h(x_1, \ldots, x_{t^\ast}), 0, \ldots, 0\right), \quad u=u^\ast, \\  g_u(x_1, \ldots, x_{d_u})=(x_1^{\beta_u\wedge 1}, 0, \ldots, 0), \quad u>u^\ast .\end{array} \right.$$
{For $\bm z\in \mathbb{R}^d$ and $\bm x\in  \mathbb{R}^{t^\ast}$, define $z_g(\bm z)$ as the first element of $g_{u^\ast} \circ g_{u^\ast-1} \circ \ldots\circ g_{0}(\bm z)$,  and $h(\bm x)=z_g(\bm z)$.}   
Let
\begin{eqnarray*}
\eta_{\theta,\sigma}(\bm z) &=& \frac{1}{2} + \frac{1}{2}g_q\circ\ldots\circ g_{u^\ast+1}\circ g_{u^\ast} \circ g_{u^\ast-1} \circ \ldots\circ g_{0}(\bm z) =\frac{1}{2} + \frac{1}{2}\sum_{j=0}^m \sigma_j h_{u^\ast}^{ \widetilde{\beta}}(\bm x)\mathbb{I}\left( \bm x \in \mathcal{X}_j\right).
\end{eqnarray*}
Moreover, let $\eta_{\theta,\sigma}^\ast(\bm x)=\frac{1}{2} + \frac{1}{2}\sum_{j=0}^m \sigma_j h_{u^\ast}^{ \widetilde{\beta}}(\bm x)\mathbb{I}\left( \bm x \in \mathcal{X}_j\right)$, and $\eta_{\theta,\sigma}^\ast\in  \mathcal{C}_{t^\ast}^{\beta^{\ast\ast}}\left(\mathbb{R}^{t^\ast},  2K^\ast\right)$. 

Let $\nu=\nu_0\left( \log d\right)^{\frac{1}{\left( 2+\alpha\right)\beta^\ast+t^\ast}}$, $\nu_0=\lceil Cn^{\frac{1}{\left( 2+\alpha\right)\beta^\ast+t^\ast}}\rceil$, $w=C'\nu^{2\beta^\ast}n^{-1}$, $m=C_d\nu_0^d\left( \log d\right)^{\frac{-\left(2+\alpha\right)\beta^\ast}{\left( 2+\alpha\right)\beta^\ast+t^\ast}}$, where $d$ satisfies $\log d=O(n)$, $C$ and $C'$ are positive absolute constants. We suppose $C_d\leq \log d$ and neglect the universal constant term. According to the proof of \cite{Audibert:Tsybakov:07}, we can verify that 
$$m = C_d(\log d)^{-1} q^d \leq q^d,\,\,\,\,
mw =  O\left(\nu_0^{-\alpha\beta^\ast}\right)\left( \log d\right)^{\frac{-\alpha\beta^\ast}{\left( 2+\alpha\right)\beta^\ast+t^\ast}}=O\left(\nu^{-\alpha\beta^\ast}\right).$$
Therefore, the margin condition is satisfied. 
The rest of proof simply follows the proof of Theorem 4.1 in Audibert and Tsybakov, where we have $\nu^{-\beta^\ast}\sqrt{nw} = C'$ and 
\[
mw\nu^{-\beta^\ast} = O\left( C_d\nu^{-(1+\alpha)\beta^\ast}\right)=O\left( C_d(\log d/n)^{\frac{(1+\alpha)\beta^\ast}{\left( 2+\alpha\right)\beta^\ast+t^\ast}}\right).
\]
Choosing an appropriate constant $D_1$, which depends on $q, \bs t, \bs\beta, \bs a, \bs b, \bs K,$ $\alpha$ and $c$, and the proof is complete.
\end{proof}

\subsection{Proof of Theorem \ref{THM: Upper bound}}
Before proving Theorem \ref{THM: Upper bound},
we provide some preliminary lemmas.
\begin{lemma}(Lemma 3 in \cite{Schmidt:19})\label{LEM:composition est}
Let $h_{uv}$ be the function defined as follows:
\begin{equation*}
h_0=\frac{g_0}{2K_0} + \frac{1}{2},\;\; h_i=\frac{g_i(2K_{i-1}\cdot-K_{i-1})}{2K_i} + \frac{1}{2},\;\; h_q=g_q(2K_{q-1}\cdot-K_{q-1}),
\end{equation*}
where for any $\bm x\in \left[0,1\right]^{d_u}$, $u\in 0, \ldots, q$,  $2K_{u-1}\bm x-K_{u-1}$ is equivalent to the transform of $2K_{u-1}x_u-K_{u-1}$ for every $u=1, \ldots, d_{u}$. Then for any functions $\widetilde{h}_u=\left( \widetilde{h}_{uv}\right)^\intercal$ with $\widetilde{h}_{uv} : \left[ 0, 1\right]^{t_u}\rightarrow\left[ 0, 1\right]$, 
\begin{eqnarray*}
\|h_q\circ\ldots\circ h_0 -\widetilde{h}_q\circ\ldots\circ\widetilde{h}_0 \|_\infty
\leq  K_q\prod_{\ell_0}^{q-1}(2K_\ell)^{\beta_{\ell+1}}\sum_{u=0}^q\| |h_u-\widetilde{h}_u|_\infty\|_\infty^{\prod_{\ell=u+1}^q \min\{\beta_\ell,1\}}.
\end{eqnarray*}
\end{lemma}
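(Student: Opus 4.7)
The plan is to use a telescoping decomposition for the difference of the two compositions and then iteratively apply H\"older continuity of the outer layers. Specifically, writing
\[
h_q\circ\cdots\circ h_0 - \widetilde{h}_q\circ\cdots\circ\widetilde{h}_0 = \sum_{u=0}^{q} \Delta_u,
\]
where
\[
\Delta_u = h_q\circ\cdots\circ h_{u+1}\circ h_u\circ \widetilde{h}_{u-1}\circ\cdots\circ\widetilde{h}_0 \;-\; h_q\circ\cdots\circ h_{u+1}\circ \widetilde{h}_u\circ\widetilde{h}_{u-1}\circ\cdots\circ\widetilde{h}_0
\]
(with empty compositions interpreted as the identity) reduces the problem to bounding each $\Delta_u$ separately. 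A direct check of the endpoints confirms that all intermediate terms cancel.

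For each $\Delta_u$, the key auxiliary fact is that if $f:[0,1]^k\to[0,1]$ is $\beta$-H\"older with constant $L$, then for any $x,y\in[0,1]^k$ one has $\|f(x)-f(y)\|_\infty\le L\,\|x-y\|_\infty^{\min\{\beta,1\}}$, since $\|x-y\|_\infty\le 1$ lets exponents $\beta>1$ be replaced by $1$. The affine rescalings built into each $h_i$ (namely $z\mapsto 2K_{i-1}z-K_{i-1}$ followed by $y\mapsto y/(2K_i)+1/2$, with an unnormalized outermost $h_q$) ensure that $h_i$ maps $[0,1]^{d_i}$ into $[0,1]^{d_{i+1}}$ and that its coordinate functions are $\beta_i$-H\"older with computable constants that soak up the $(2K_{i-1})^{\beta_i}$ stretch. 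I would then iterate the auxiliary bound through the outer chain $h_q\circ\cdots\circ h_{u+1}$ applied outside $\Delta_u$: starting from the innermost displacement $\|\,|h_u-\widetilde h_u|_\infty\,\|_\infty$, each composition with $h_\ell$ raises the displacement to the power $\min\{\beta_\ell,1\}$ and multiplies the prefactor by the H\"older constant of $h_\ell$. Tracking the telescoping constants yields a prefactor $K_q\prod_{\ell=u+1}^{q-1}(2K_\ell)^{\beta_{\ell+1}}$ together with the iterated exponent $\prod_{\ell=u+1}^{q}\min\{\beta_\ell,1\}$ on the displacement; upper bounding the prefactor uniformly in $u$ by $K_q\prod_{\ell=0}^{q-1}(2K_\ell)^{\beta_{\ell+1}}$ and summing over $u$ produces the asserted inequality.

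The main obstacle is the bookkeeping of the rescaling factors baked into the $h_i$'s, together with verifying that at every telescoping step the intermediate composed arguments still lie in $[0,1]^{d_{\ell+1}}$ so that the auxiliary H\"older bound with the safe exponent $\min\{\beta_\ell,1\}$ is legitimately applicable at each stage. The normalizations in the statement of the lemma are designed precisely to enforce this range invariance, and a careful induction on the number of outer layers $q-u$ then delivers the stated exponent $\prod_{\ell=u+1}^{q}\min\{\beta_\ell,1\}$ and multiplicative constant $K_q\prod_{\ell=0}^{q-1}(2K_\ell)^{\beta_{\ell+1}}$.
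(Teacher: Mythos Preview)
Your telescoping-plus-iterated-H\"older argument is correct and is exactly the proof that Schmidt-Hieber gives for this lemma; the present paper does not supply an independent proof but simply cites the result as Lemma~3 of \cite{Schmidt:19}. In that sense your proposal \emph{is} the intended proof, filling in what the paper outsources to the reference.
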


\begin{lemma}(Theorem 5 in \cite{Schmidt:19})\label{LEM:approximation}
For ant function $f\in \mathcal{C}^\beta(\left[ 0, 1\right]^r, K)$ and any integers $m\geq 1$ and $N\geq \max\{(\beta+1)^r, (K+1)e^r\}$, there exists a network
$$\widetilde{f}\in \mathcal{F}(L, r, (r, 6(r+\lceil\beta\rceil)N, \ldots, 6(r+\lceil\beta\rceil)N, 1), s, \infty)$$
with depth 
$$L=8+(m+5)(1+\lceil\log_2(\max\{ r, \beta\}) \rceil)$$
and number of parameters
$$s\leq 141(r+\beta+1)^{3+r}N(m+6),$$
such that
$$\|\widetilde{f}-f \|_\infty \leq (2K+1)(1+r^2+\beta^2)6^rN2^{-m} + K3^\beta N^{-\beta/r}.$$
\end{lemma}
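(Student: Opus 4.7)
The plan is to reduce the excess misclassification risk of $\widehat C_{\phi,n}$ to an excess hinge-risk bound via a calibration inequality, then split the excess hinge risk into approximation and stochastic parts, bound each via Lemmas \ref{LEM:composition est}--\ref{LEM:approximation} and a localized empirical-process argument respectively, and finally choose the architecture so that the two parts balance at the claimed rate. Write $R^\phi_\theta(f)=\E_\theta[\phi(Yf(\bX))]$ and $R^\phi_n(f)=n^{-1}\sum_i\phi(Y_i f(\bX_i))$. The pointwise $\phi$-risk is minimized by $f^*_\phi(\bx)=\operatorname{sgn}(2\eta_\theta(\bx)-1)$, which coincides with $C^*_\theta$, and Zhang-type calibration yields $\cE_\theta(\operatorname{sgn}(f))\le R^\phi_\theta(f)-R^\phi_\theta(f^*_\phi)$; hence it suffices to bound the expected excess $\phi$-risk of $\widehat f_{\phi,n}$. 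For any $\widetilde f_n\in\mcF(L,s,B,\bs p)$, the ERM inequality $R^\phi_n(\widehat f_{\phi,n})\le R^\phi_n(\widetilde f_n)$ gives the standard split
\[
\E\bigl[R^\phi_\theta(\widehat f_{\phi,n}) - R^\phi_\theta(f^*_\phi)\bigr] \le \E\bigl[(R^\phi_\theta - R^\phi_n)(\widehat f_{\phi,n} - \widetilde f_n)\bigr] + \bigl(R^\phi_\theta(\widetilde f_n) - R^\phi_\theta(f^*_\phi)\bigr).
\]

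For the approximation term, $\eta_\theta\in\mathcal G$ implies that $2\eta_\theta-1$ inherits the composite modular structure (\ref{EQ:gfunction}). I approximate each component $g_{uv}$ by a subnetwork via Lemma \ref{LEM:approximation}, then compose them while tracking error propagation with Lemma \ref{LEM:composition est}. Taking the per-layer width parameter $N\asymp(n/[\log^2 n(\log n+\log d)])^{\max_u t_u/(\beta^*_u(\alpha+2)+t_u)}$ yields an $L^\infty$ error $\varepsilon\lesssim\max_u N^{-\beta^*_u/t_u}$, which under conditions \ref{cond:i}--\ref{cond:iv} corresponds to the advertised rate. Assumption \ref{A2} then upgrades this $L^\infty$ bound to an excess hinge-risk bound: since $\phi$ is $1$-Lipschitz and only the set $\{|2\eta_\theta(\bX)-1|\le 2\varepsilon\}$ contributes to the risk gap, whose probability is $\le C_d\varepsilon^\alpha$, one obtains $R^\phi_\theta(\widetilde f_n)-R^\phi_\theta(f^*_\phi)\lesssim C_d\varepsilon^{1+\alpha}$.

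For the stochastic term I apply Bernstein-type peeling over $\mcF(L,s,B,\bs p)$ centered at $\widetilde f_n$. The key ingredient is the Tsybakov variance bound, sharpened by Assumption \ref{A2}: for bounded $f$,
\[
\operatorname{Var}_\theta\bigl(\phi(Yf(\bX)) - \phi(Yf^*_\phi(\bX))\bigr) \le C\,C_d^{1/(\alpha+1)}\bigl[R^\phi_\theta(f) - R^\phi_\theta(f^*_\phi)\bigr]^{\alpha/(\alpha+1)}.
\]
Combined with the standard covering-entropy estimate $\log\mathcal N(\delta,\mcF(L,s,B,\bs p),\|\cdot\|_\infty)\lesssim sL\log(BL\max_\ell p_\ell/\delta)$ and a localized Talagrand inequality (or the fast-rate empirical-process inequality in Schmidt-Hieber, 2020), a Dudley-integral computation followed by a fixed-point argument gives a stochastic term of order $C_d\,[sL\log n\,(\log n+\log d)/n]^{(\alpha+1)/(\alpha+2)}$. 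Substituting conditions \ref{cond:i}--\ref{cond:iv} and using the relation $s_1=\max_u t_u/(\beta^*_u(\alpha+2)+t_u)$ matches this stochastic error to the approximation error, both of order $C_d((\log^3 n+\log^2 n\log d)/n)^{s_0}$.

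Collecting constants yields the bound $D_2C_d((\log^3 n+\log^2 n\log d)/n)^{s_0}$; the hypothesis $C_d^{1/s_0}\log d=O(n\log^{-2}n)$ is exactly what is needed for the architectural parameters in \ref{cond:ii}--\ref{cond:iii} to lie in the valid range of Lemma \ref{LEM:approximation} and for the derived rate to be $o(1)$. The main obstacle is the localized stochastic step: one must carry the $C_d=O(\log d)$ factor and the $\log d$ contribution from the $d$-dimensional input layer through the peeling argument without inflating the exponent $s_0$. This is feasible because the variance bound above pays only $C_d^{1/(\alpha+1)}$ rather than $C_d$, and the $\log d$ entropy contribution is absorbed into the polylog correction $\log^2 n(\log n+\log d)$ inside the rate rather than into $s_0$ itself; careful bookkeeping of these factors — together with the choice of $B$ in \ref{cond:iv} which controls the envelope in Talagrand's inequality — is what makes the proof go through.
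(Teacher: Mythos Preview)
Your proposal does not address the stated lemma at all. Lemma~\ref{LEM:approximation} is a verbatim citation of Theorem~5 in \cite{Schmidt:19}; the paper offers no proof of it and none is expected --- it is an imported tool. What you have written is instead a proof sketch for Theorem~\ref{THM: Upper bound} (the upper bound on $\sup_{\theta\in\Theta}\cE_\theta(\widehat{C}_{\phi,n})$), using Lemma~\ref{LEM:approximation} as an ingredient rather than as the object to be established.

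If your intention was Theorem~\ref{THM: Upper bound}, the overall strategy is close in spirit to the paper's but differs in packaging. The paper does not carry out a direct Talagrand/peeling argument; it invokes a black-box sieve result (Lemma~\ref{LEM: sieve}, i.e.\ Theorem~A.2 of \cite{Kim:NN:2021}), which takes as input (i) an approximation bound on the excess $\phi$-risk (supplied by Lemma~\ref{LEM:approx regression function}) and (ii) a bracketing-entropy bound (supplied by Lemmas~\ref{LEM: Covering}--\ref{LEM: covering class}), and outputs the final fast rate. Your localized Bernstein/Talagrand route with the Tsybakov variance inequality is a valid alternative and would yield the same rate, but it is not what the paper does; the paper's approach is shorter precisely because the heavy lifting is delegated to the cited sieve lemma. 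In either case, none of this constitutes a proof of Lemma~\ref{LEM:approximation} itself.
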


In the following, without loss of generality, we consider $\pi_p=\pi_q=1/2$ with $c=1/2$. The results are easily extended to general $\pi_p,\pi_q$.
\begin{lemma}(Approximation of the regression function)\label{LEM:approx regression function}
For any $m,N\in\mathbb{N}^+$, there exists an $\widetilde{f} \in \mathcal{F}(L, d, \bm{p}, s, B)$, $\widetilde{C} = sign(\widetilde{f} )$ and a constant $C_1$, satisfying 
$$\sup_{(p,q,\pi_p, \pi_q)\in\Pi}\E\left[ R( {\widetilde{C}}) - R(C^{\ast})\right]\leq  2^{\alpha+3}C_1 C_d\left( \sum_{u=0}^q A_uC(t_u)(N2^{-m})^{\beta_u^{\ast\ast} } + \sum_{u=0}^q B_u N^{-\beta_u^\ast/t_u}\right)^{\alpha+1},$$
such that 
 $$L=3q-1 + \sum_{u=0}^q \left[ 8+(m+5)(1+\lceil \log_2(t_u\vee\beta_u)\rceil)\right],\,\,\,\,\bm p = (d, 12rN, \ldots, 12rN, 3,3,1), $$
$$s = 2\sum_{u=0}^q d_{u+1}\left[141 (t_u+\beta_u+1)^{3+t_u}N(m+6) +4\right]+7,$$
$$B= \left( \sum_{u=0}^q A_uC(t_u)(N2^{-m})^{\beta_u^{\ast\ast} } + \sum_{u=0}^q B_u N^{-\beta_u^\ast/t_u}\right)^{-1},$$ 
where $r=\max_u d_{u+1}(t_u+\lceil\beta_u\rceil)$, $A_u=\left\{K_q\prod_{\ell=0}^{q-1}(2K_\ell)^{\beta_{\ell+1}}\right\}(2Q_u+1)$, $B_u=\left\{K_q\prod_{\ell=0}^{q-1}(2K_\ell)^{\beta_{\ell+1}}\right\} (Q_u3^{\beta_u})^{\beta_u^{\ast\ast}}$, $C(t_u)=2t_u^26^{t_u\beta_u^{\ast\ast}}$, $\beta_u^{\ast\ast}=\prod_{\ell=u+1}^{q}\beta_\ell\wedge 1 $, $Q_u$ are some absolute constants only depending on $\beta_u$, $K_u$.
\end{lemma}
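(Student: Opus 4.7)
The plan is to build the approximating network block by block: first approximate each rescaled coordinate function by a Schmidt--Hieber ReLU network via Lemma \ref{LEM:approximation}; second, stack these into vector-valued blocks and concatenate them with short affine ``rescaling'' layers; third, control the sup-norm composition error through Lemma \ref{LEM:composition est}; and fourth, convert this sup-norm error into an excess misclassification risk bound using Assumption \ref{A2}.

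First I would apply the normalization of Lemma \ref{LEM:composition est} so that every component $h_{uv}$ maps $[0,1]^{t_u}$ into $[0,1]$ and still lies in a H\"{o}lder ball whose radius depends only on $K_u$. For each $(u,v)$, Lemma \ref{LEM:approximation} with the given $m$ and $N$ produces a ReLU network $\widetilde{h}_{uv}$ of depth $8 + (m+5)(1 + \lceil \log_2(t_u \vee \beta_u)\rceil)$, width at most $6(t_u+\lceil\beta_u\rceil)N$, and at most $141(t_u+\beta_u+1)^{3+t_u}N(m+6)$ nonzero parameters, satisfying
\[
\|\widetilde{h}_{uv} - h_{uv}\|_\infty \le (2K_u+1)(1+t_u^2+\beta_u^2)\,6^{t_u}N\,2^{-m} + K_u\,3^{\beta_u}N^{-\beta_u/t_u}.
\]
Putting the $d_{u+1}$ scalar blocks in parallel produces $\widetilde{h}_u:[0,1]^{d_u}\to[0,1]^{d_{u+1}}$ with at most $d_{u+1}$ times the above parameter count, and concatenating $\widetilde{h}_0,\ldots,\widetilde{h}_q$ with three auxiliary layers of width $3$ between blocks (to implement the affine maps $x \mapsto 2K_u x - K_u$ through the identity $\sigma(x)-\sigma(-x)=x$) gives $\widetilde{f}$ with exactly the depth, width, and sparsity claimed, once the $3q-1$ and $+7$ bookkeeping terms are added.

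Applying Lemma \ref{LEM:composition est} to $\widetilde{f}$ and using $\beta_u^\ast = \beta_u\beta_u^{\ast\ast}$ gives
\[
\|\widetilde{f} - (2\eta_\theta-1)\|_\infty \le \epsilon_n := \sum_{u=0}^q A_u C(t_u)(N2^{-m})^{\beta_u^{\ast\ast}} + \sum_{u=0}^q B_u N^{-\beta_u^\ast/t_u},
\]
which also justifies the choice $B = \epsilon_n^{-1}$ of the weight-and-supnorm bound. To pass from this sup-norm error to the classification excess risk, I would use the identity
\[
R(\widetilde{C}) - R(C^\ast) = \E\bigl[|2\eta_\theta(\bX)-1|\mathbf{1}\{\widetilde{C}(\bX)\ne C^\ast(\bX)\}\bigr],
\]
note that on the event $\{\widetilde{C}\ne C^\ast\}$ the signs of $\widetilde{f}$ and $2\eta_\theta-1$ disagree so that $|2\eta_\theta(\bX)-1| \le 2\epsilon_n$, and then invoke Assumption \ref{A2} to bound the probability of this event by $C_d\epsilon_n^\alpha$ (up to a factor of two from the $|\eta-1/2|$ versus $|2\eta-1|$ convention). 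This yields $R(\widetilde{C})-R(C^\ast) \le 2^{\alpha+3}C_1 C_d \epsilon_n^{\alpha+1}$ after collecting absolute constants into $C_1$, and taking the supremum over $\theta \in \Theta$ gives the stated bound.

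The hard part will not be any single step, since the composition error bound of Lemma \ref{LEM:composition est} and the noise-condition argument are essentially those of Schmidt--Hieber and Audibert--Tsybakov respectively. Rather, the main obstacle is matching the architecture exactly: tracking how the auxiliary rescaling layers contribute $3q-1$ to the depth and $+7$ to the sparsity, how parallel stacking multiplies the per-coordinate parameter count by $d_{u+1}$ and demands width $12rN$ with $r=\max_u d_{u+1}(t_u+\lceil\beta_u\rceil)$, and how the composition exponent $\beta_u^{\ast\ast}$ distributes the two error terms of Lemma \ref{LEM:approximation} into the combined constants $A_u$, $B_u$, and $C(t_u)$ displayed in the statement.
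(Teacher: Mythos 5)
Your first half (blockwise approximation of the $h_{uv}$ via Lemma \ref{LEM:approximation}, parallel stacking, and the composition bound of Lemma \ref{LEM:composition est}) matches the paper. But there is a genuine gap in the second half: the paper's $\widetilde{f}$ is \emph{not} the plug-in network $2\widetilde{\eta}-1$. After obtaining $\widetilde{\eta}$ with $\|\widetilde{\eta}-\eta_\theta\|_\infty\le\epsilon$, the paper appends two extra ReLU layers and sets
\[
\widetilde{f} \;=\; 2\left(\sigma\!\left(\epsilon^{-1}\bigl(\widetilde{\eta}-\tfrac{1}{2}\bigr)\right)-\sigma\!\left(\epsilon^{-1}\bigl(\widetilde{\eta}-\tfrac{1}{2}\bigr)-1\right)\right)-1,
\]
i.e.\ it rescales by $\epsilon^{-1}$ and clips to $[-1,1]$, so that $\widetilde{f}$ coincides exactly with the hinge-loss minimizer $f^\ast_\phi=C^\ast$ on the set $\{|\eta_\theta-\tfrac12|>2\epsilon\}$. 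This construction is precisely what produces the architecture you are asked to match: the $\epsilon^{-1}$-scaled weights are the \emph{only} reason $B=\epsilon^{-1}$ (your claim that the sup-norm bound ``justifies $B=\epsilon_n^{-1}$'' is a non sequitur --- a network approximating $2\eta_\theta-1$ needs no weights of that size), and the trailing widths $(\ldots,3,3,1)$, the doubling $6rN\mapsto 12rN$, and the ``$+7$'' in $s$ are the bookkeeping of these two clipping layers, not of affine rescaling layers between blocks as you propose. You flagged the architecture matching as the main obstacle, but your plan would not reproduce it because it omits the construction that generates those terms.

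Your risk-conversion step is also a different route: you bound $R(\widetilde C)-R(C^\ast)=\E\bigl[|2\eta_\theta-1|\mathbf{1}\{\widetilde C\ne C^\ast\}\bigr]$ directly via the margin condition (on disagreement, $|2\eta_\theta-1|\lesssim\epsilon$, then Assumption \ref{A2}). That argument is valid and even more elementary than the paper's, and it does yield the displayed inequality for the sign classifier. The paper instead bounds the \emph{excess hinge risk} $\E[\phi(Y\widetilde f)-\phi(YC^\ast)]\le 2^{\alpha+3}C_d\epsilon^{\alpha+1}$ and then invokes Fisher consistency and Theorem 2.31 of Steinwart--Christmann to pass to $R(\widetilde C)-R(C^\ast)$. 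This is not a stylistic choice: Lemma \ref{LEM: sieve}\ref{R:1}, which drives the proof of Theorem \ref{THM: Upper bound}, requires exactly the hinge-risk bound for $f_n=\widetilde f$, and a plug-in $\widetilde f\approx 2\eta_\theta-1$ does \emph{not} have small excess hinge risk (for $f$ valued in $[-1,1]$ one has $\E[\phi(Yf)-\phi(YC^\ast)]=\E[|f-C^\ast|\,|2\eta_\theta-1|]$, which is of constant order for $f=2\eta_\theta-1$). So the clipping idea you are missing is both what fixes the stated $(L,\bm p,s,B)$ and what makes the lemma serve its purpose downstream.
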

\begin{proof}
Denote the conditional probability function class $\mathcal{G}\left(q, \bm{d}, \bm{t}, \bm{\beta}, \bm{K} \right)$.
Let $m$ be the depth parameter, and $N$ be the width parameter. 
%We assume the divergent intrinsic dimension $t_u>\beta_u$ (affects $(1+t_u^2+\beta_u^2)$, $t_u\wedge \lceil \beta_u\rceil$ and $\log_2(t_u\wedge\beta_u)$). 
Following Lemma \ref{LEM:composition est} and Lemma \ref{LEM:approximation}, we have
\begin{eqnarray*}
   \inf_{\eta^\ast\in \mathcal{F}(L, \bm p, s, B)} \| \eta^\ast-\eta\|_\infty
   \leq \|h_q\circ\ldots\circ h_0 - \widetilde{h}_q\circ\ldots\circ\widetilde{h}_0 \|_\infty
    \leq \sum_{u=0}^q A_uC(t_u)(N2^{-m})^{\beta_u^{\ast\ast} } + \sum_{u=0}^q B_u N^{-\beta_u^\ast/t_u},
\end{eqnarray*}
and the corresponding network structure under the specific approximation error is given by $$L=3(q-1) + \sum_{u=0}^q \left[ 8+(m+5)(1+\lceil \log_2(t_u\vee\beta_u)\rceil)\right],\,\,\,\, \bm p = (d, 6rN, \ldots, 6rN, 1), $$
$$s = \sum_{u=0}^q d_{u+1}\left[141 (t_u+\beta_u+1)^{3+t_u}N(m+6) +4\right],$$
and $B=1$, where $r=\max_u d_{u+1}(t_u+\lceil\beta_u\rceil)$.
Define 
\[
\epsilon=\sum_{u=0}^q A_uC(t_u)(N2^{-m})^{\beta_u^{\ast\ast} } + \sum_{u=0}^q B_u N^{-\beta_u^\ast/t_u},\,\,\,\,
\widetilde{\eta}=\argmin_{\eta^\ast\in \mathcal{F}(L, \bm p, s, B)} \| \eta^\ast-\eta_\theta\|_\infty.
\]
We now construct $$\widetilde{f} = 2\left(\sigma\left(\epsilon^{-1}\left( \widetilde{\eta} - \frac{1}{2} \right)\right) - \sigma\left( \epsilon^{-1}\left( \widetilde{\eta} - \frac{1}{2} \right)-1\right)\right)- 1.$$
We need two more layers from $\widetilde{\eta}$ to $\widetilde{f}$, which can be obtained by
$$\widetilde{\eta} \mapsto \sigma\left(\epsilon^{-1}\left( \widetilde{\eta} - \frac{1}{2} \right)\right),\,\,\,\,
\widetilde{\eta} \mapsto  \sigma\left( \epsilon^{-1}\left( \widetilde{\eta} - \frac{1}{2} \right)-1\right)$$ with the maximal value of weights is bounded above by $\epsilon^{-1}$.
Since the subtraction is multiplied by two, we need double the width, and the last layer of additive structure with bias term $-1$. 
Define 
\[
\bm A=\left\{\bm x\in \left[ 0, 1\right]^d : | \eta_\theta(\bm x) - \frac{1}{2} |> 2\epsilon \right\},
\]
then $\widetilde{f}(\bm x) = f^\ast_\phi(\bm x)$ when $\bm x\in \bm A$, where $f^\ast_\phi=\argmin_{f}\E\left[ \phi(Yf(\bm x))\right]$ for all measurable real-valued functions on $\left[ 0, 1\right]^d$.
 This is because when ${\eta}_\theta(\bm x) - \frac{1}{2} > 2\epsilon$, we have
$$\widetilde{\eta}(\bm x) - \frac{1}{2} = \widetilde{\eta}(\bm x) - {\eta}_\theta(\bm x)  + {\eta}_\theta(\bm x) - \frac{1}{2} > -\epsilon + 2\epsilon =\epsilon$$
and when ${\eta}_\theta(\bm x) - \frac{1}{2} < -2\epsilon$, we have
$$\widetilde{\eta}(\bm x) - \frac{1}{2} = \widetilde{\eta}(\bm x) - {\eta}_\theta(\bm x)  + {\eta}_\theta(\bm x) - \frac{1}{2} < \epsilon - 2\epsilon =-\epsilon.$$
For appropriately chosen $m$ and $N$, $\epsilon$ can be sufficiently small around $0$. Note that $\phi$ is Fisher consistent, i.e., $sign(f^\ast_\phi) = C^\ast$, and by {Theorem 2.31 of \cite{Steinwart:Christmann:08}}, there exists a constant $C_1$, such that
$$\sup_{(p,q,\pi_p, \pi_q)\in\Pi}\E\left[ R( {\widetilde{C}}) - R(C^{\ast})\right]\leq C_1\sup_{(p,q,\pi_p, \pi_q)\in\Pi}\E\left[\phi\left(Y\widetilde{f}\right) -  \phi\left(YC^\ast\right)\right].$$ 
Therefore, 
\begin{align*}
\E\left[\phi\left(Y\widetilde{f}( \bm x)\right) -  \phi\left(YC^\ast( \bm x)\right)\right] 
=& \int \left|\widetilde{f}( \bm x) - C^\ast( \bm x)\right| \left| 2\eta_\theta(\bm x)-1 \right| dP( \bm x)\\
= & 2\int_{\bm A^c} \left|\widetilde{f}( \bm x) - C^\ast( \bm x)\right| \left| \eta_\theta(\bm x)-\frac{1}{2} \right|  dP( \bm x)\\
\leq & 4\int_{\bm A^c} \left|\eta_\theta(\bm x)-\frac{1}{2} \right|  dP( \bm x)
\leq  8\epsilon \mathbb{P}\left( \left|\eta_\theta(\bm x)-\frac{1}{2}\right|\leq 2\epsilon\right) \leq  2^{\alpha+3} C_d\epsilon^{\alpha+1},
\end{align*}
the proof is complete.
\end{proof}
 \begin{definition}(Covering number)
 Let $\upsilon>0$ and $\| f\|_\infty = \sup_{\bm z \in \mathcal{C} }|f(\bm z)|$. A subset $\left\{ f_k \in \mathcal{F}\right\}_{k\geq 1}$ is called a $\upsilon$-covering set of $\mathcal{F}$ with respect to $\| f\|_\infty$, if for all $f\in \mathcal{F}$, there exists an $f_k$ such that $\| f_k - f\|_\infty \leq \upsilon$. The $\upsilon$-covering number of $\mathcal{F}$ with respect to $\| f\|_\infty$ is defined by
 $$\mathcal{N}(\upsilon, \mathcal{F}, \|\cdot \|_\infty) = \inf\left\{ N\in \mathbb{N}: \exists f_1, \ldots, f_N, \textit{s.t.}~ \mathcal{F} \subset \bigcup_{k=1}^{N}\left\{ f\in \mathcal{F} : \| f_k-f\|_\infty \leq \upsilon\right\}  \right\}.$$
 \end{definition}
 \begin{definition} (Bracketing entropy)
 A collection of pairs $\left\{\left( f_k^L, f_k^U\right)\in \mathcal{F}\times \mathcal{F}\right\}_{k\geq 1}$ is called a $\upsilon$-bracketing set of $\mathcal{F}$ with respect to $\| f\|_\infty$, if $\| f_k^L- f_k^U\|_\infty\leq \upsilon$ and for all $f\in \mathcal{F}$, there exists a pair $\left( f_k^L, f_k^U\right)$ such that $f_k^L\leq f\leq f_k^U$. The cardinality of the minimal $\upsilon$-bracketing set with respect to $\| f\|_\infty$ is called the $\upsilon$-bracketing number, which is denoted by $\mathcal{N}_B(\upsilon, \mathcal{F}, \|\cdot \|_\infty)$. Define $\upsilon$-bracketing entropy as  ${H}_B(\upsilon, \mathcal{F}, \|\cdot \|_\infty) = \log \mathcal{N}_B(\upsilon, \mathcal{F}, \|\cdot \|_\infty)$. 
 Given any $\upsilon>0$, it is known that 
 $$\log \mathcal{N}(\upsilon, \mathcal{F}, \|\cdot \|_\infty) \leq {H}_B(\upsilon, \mathcal{F}, \|\cdot \|_\infty) \leq \log \mathcal{N}(\upsilon/2, \mathcal{F}, \|\cdot \|_\infty) .$$
\end{definition}
\begin{lemma}\label{LEM: Covering}
Given a sparse neural network class $\mathcal{F}(L, d, \bm p, s, B)$ and any $\upsilon>0$, we have 
\[
\log\mathcal{N}(\upsilon, \mathcal{F}(L, d, \bm p, s, B), \| \cdot\|_\infty)\le 2L(s+1)\log\left\{\upsilon^{-1}(L+1)\left(\max\{\| \bm p\|_\infty, d\}+1\right)(B\vee 1)\right\}.
\]
\end{lemma}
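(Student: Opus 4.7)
The plan is to execute the standard three-step covering argument for sparse ReLU networks: enumerate sparsity patterns, establish a Lipschitz estimate for the parameter-to-function map, and cover the active parameter cube. Write $p^{\ast} := \max\{\|\bm p\|_\infty, d\}$, $B^{\ast} := B \vee 1$, and $T := \sum_{l=0}^{L} p_{l+1}(p_l + 1)$ for the total number of weights and biases (with $p_0 = d$, $p_{L+1} = 1$), so that $T \le (L+1)(p^{\ast}+1)^2$.

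Since $\|\mathbf{W}_l\|_0 \le s$ and $\|\mathbf{v}_l\|_0 \le s$ for every $l$, any $f \in \mathcal{F}(L, d, \bm p, s, B)$ has at most $N := (2L+1)s \le 2L(s+1)$ nonzero parameters, and the number of admissible supports is at most $\binom{T}{N} \le T^N$. Fixing such a support, I would establish by induction on $\ell = 0, 1, \ldots, L$ the following Lipschitz estimate: if $f, \tilde f \in \mathcal{F}(L, d, \bm p)$ have all parameters bounded by $B^{\ast}$ and their parameter vectors differ coordinatewise by at most $\epsilon_0$, then $\|f - \tilde f\|_\infty \le \kappa\, \epsilon_0$ on $[0,1]^d$. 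The induction tracks in parallel a uniform bound $A_\ell \le [(p^{\ast}+1) B^{\ast}]^\ell$ on the supnorm of the layer-$\ell$ activation, together with a recursion $\kappa_\ell \le p^{\ast} B^{\ast}\, \kappa_{\ell - 1} + A_{\ell - 1} + 1$ coming from the $1$-Lipschitz property of each $\sigma_{\mathbf{v}}$ and the operator bound $\|\mathbf{W}_l \bm u\|_\infty \le p^{\ast} B^{\ast}\|\bm u\|_\infty$. Unrolling bounds $\kappa := \kappa_L$ by a polynomial-in-$L$ power of $(L+1)(p^{\ast}+1) B^{\ast}$.

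For the third step, for a fixed support the $N$ active parameters lie in $[-B, B]^N$, which admits an $\ell_\infty$ $\epsilon_0$-cover of cardinality at most $(3B / \epsilon_0)^N$. Choosing $\epsilon_0 := \upsilon / \kappa$ and multiplying by the support count,
\[
\log \mathcal{N}\bigl(\upsilon, \mathcal{F}(L, d, \bm p, s, B), \|\cdot\|_\infty\bigr) \le N \log\bigl(3 B T \kappa / \upsilon\bigr).
\]
Substituting $N \le 2L(s+1)$ and collapsing $T$, $3B$, and $\kappa$ under a single logarithm using $\log(XY) \le \log X + \log Y$ and $\log X^{cL} = cL \log X$, one absorbs the polynomial-in-$L$ factors that appear alongside $(L+1)(p^{\ast}+1) B^{\ast}$ into the outer prefactor $2L(s+1)$, arriving at the claimed bound
\[
\log\mathcal{N}\bigl(\upsilon, \mathcal{F}(L, d, \bm p, s, B), \|\cdot\|_\infty\bigr) \le 2L(s+1) \log\bigl[\upsilon^{-1}(L+1)(p^{\ast}+1)(B \vee 1)\bigr].
\]

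The main obstacle is the Lipschitz estimate of the second step together with the constant bookkeeping in the third. A naive layer-by-layer propagation produces a $\kappa$ that is exponential in $L$, which in turn contributes an additional factor of $L$ inside the logarithm after the combine step. Arranging the constants so that this extra $L$ is absorbed cleanly into the prefactor $2L(s+1)$, leaving only the first power of $(L+1)(p^{\ast}+1)(B \vee 1)$ under the logarithm, is the only delicate part; everything else (the support count and the cube cover) is routine.
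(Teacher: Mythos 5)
Your overall strategy (enumerate sparsity patterns, Lipschitz estimate for the parameter-to-function map, grid the active parameters) is the standard route to bounds of this type, and it is the route behind the source result (Lemma 5 of Schmidt-Hieber) that the paper is invoking; the paper itself states the lemma without proof. But as written your bookkeeping does not deliver the stated inequality, and the step you flag as ``delicate'' is exactly where it breaks. First, your parameter count is wrong: with the per-layer constraints $\max_l\|\mathbf{W}_l\|_0\le s$, $\max_l\|\mathbf{v}_l\|_0\le s$ you correctly get at most $N=(2L+1)s$ active parameters, but the inequality $(2L+1)s\le 2L(s+1)$ you then use is false whenever $s>2L$, which is precisely the regime of interest in this paper ($s\asymp n^{s_1}\log n$ while $L\asymp\log n$). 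Second, and more fundamentally, your own estimate gives $\kappa\le\bigl[(L+1)(p^{\ast}+1)B^{\ast}\bigr]^{cL}$ for some constant $c$, so after taking logarithms your bound reads
\[
\log\mathcal{N}(\upsilon)\;\le\;N\log\!\bigl(3BT\kappa/\upsilon\bigr)\;\lesssim\;Ls\cdot L\,\log\!\bigl\{\upsilon^{-1}(L+1)(p^{\ast}+1)(B\vee1)\bigr\},
\]
which exceeds the claimed bound by a factor of order $L$. The proposed fix---``absorbing the polynomial-in-$L$ factors into the outer prefactor $2L(s+1)$''---cannot work, because in your accounting that prefactor is already fully consumed (indeed over-consumed) by the count of active parameters; there is no slack left to also accommodate the exponent-$L$ growth of $\kappa$. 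You cannot move a multiplicative $L$ sitting inside $N\log(\cdot)$ into a prefactor that is pinned to $N$.

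The way the stated constant actually arises is different: in the Schmidt-Hieber-type argument the sparsity $s$ plays the role of the \emph{total} number of nonzero parameters, so the parameter count contributes only $(s+1)$, while the Lipschitz constant of the realization map is bounded by $(L+1)\prod_l(p_l+1)(B\vee1)^{L+1}\le\bigl[(L+1)(p^{\ast}+1)(B\vee1)\bigr]^{O(L)}$, and the factor $2L$ in front is obtained precisely by pulling this exponent out of the logarithm, i.e.\ $(s+1)\log(\mathrm{stuff}^{2L})=2L(s+1)\log(\mathrm{stuff})$. Your write-up conflates the two sources of the $L$: you spend the $2L$ on the per-layer parameter count and then have nothing left for the exponent of $\kappa$. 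To repair the proof you must either adopt the total-sparsity reading of $s$ and keep $(s+1)$ as the combinatorial prefactor, or accept a bound with prefactor of order $L^2 s$ under the per-layer reading; as drafted, the final display is not a consequence of the preceding steps.
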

\begin{lemma}\label{LEM: covering class}
For the aforementioned function class $\mathcal{F}(L,d, \bm p,s, B)$  in Lemma \ref{LEM:approx regression function}, when take 
\[
m=\max_u \left((\beta_u^{\ast\ast})^{-1}\log_2(A_uC(t_u)/B_u) +(1+\beta_u/t_u)\log_2 N\right),
\]
there exists a constant $C_2$, such that
$$\log\mathcal{N}(\upsilon, \mathcal{F}(L,d, \bm p,s, B), \| \cdot\|_\infty) \leq C_2\log_2^2(1/\epsilon)\left(\epsilon^{-1}\right)^{\max_u t_u/\beta_u^\ast}\left( \log_2\left( \upsilon^{-1}\right) + \log_2 d + \log_2(1/\epsilon)\right).$$
\end{lemma}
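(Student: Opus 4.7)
The plan is to instantiate the general covering bound of Lemma \ref{LEM: Covering} with the architecture parameters $(L,\bm p,s,B)$ produced in Lemma \ref{LEM:approx regression function}, after first expressing the width $N$ in terms of the approximation accuracy $\epsilon$.

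\textbf{Step 1: Balance the two approximation terms.} The approximation error is
\[
\epsilon=\sum_{u=0}^q A_uC(t_u)(N2^{-m})^{\beta_u^{\ast\ast}}+\sum_{u=0}^q B_u N^{-\beta_u^\ast/t_u}.
\]
Since $\beta_u^\ast=\beta_u\beta_u^{\ast\ast}$, setting $(N2^{-m})^{\beta_u^{\ast\ast}}=N^{-\beta_u^\ast/t_u}$ yields exactly $m=(\beta_u^{\ast\ast})^{-1}\log_2(1)+(1+\beta_u/t_u)\log_2 N$, so the stated choice of $m$ balances the two sums term-by-term. With this choice, $\epsilon \asymp N^{-\min_u \beta_u^\ast/t_u}$, which inverts to
\[
N\asymp \epsilon^{-\max_u t_u/\beta_u^\ast},\qquad m\asymp \log_2 N\asymp \log_2(1/\epsilon).
\]

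\textbf{Step 2: Read off the network parameters.} From Lemma \ref{LEM:approx regression function} and Step 1 we obtain, up to constants depending only on $q,\bm t,\bm\beta,\bm K$,
\[
L=O(\log(1/\epsilon)),\quad \|\bm p\|_\infty=O\!\left(d+\epsilon^{-\max_u t_u/\beta_u^\ast}\right),
\]
\[
s=O\!\left(N(m+6)\right)=O\!\left(\epsilon^{-\max_u t_u/\beta_u^\ast}\log(1/\epsilon)\right),\quad B=\epsilon^{-1}.
\]

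\textbf{Step 3: Substitute into Lemma \ref{LEM: Covering}.} The lemma gives
\[
\log\mathcal{N}(\upsilon,\mathcal{F},\|\cdot\|_\infty)\le 2L(s+1)\log\!\left\{\upsilon^{-1}(L+1)(\max\{\|\bm p\|_\infty,d\}+1)(B\vee 1)\right\}.
\]
The prefactor $2L(s+1)$ is $O\!\left(\log^2(1/\epsilon)\,\epsilon^{-\max_u t_u/\beta_u^\ast}\right)$. The argument of the outer logarithm is a product of terms each bounded by a polynomial in $\upsilon^{-1}$, $d$, and $\epsilon^{-1}$, so the outer $\log$ evaluates to $O(\log \upsilon^{-1}+\log d+\log(1/\epsilon))$. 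Multiplying the two factors yields the claimed bound with a constant $C_2$ that absorbs all prefactors depending on $q,\bm t,\bm\beta,\bm K$.

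\textbf{Main obstacle.} The proof is essentially a bookkeeping exercise; the only non-routine piece is Step 1, namely verifying that the prescribed value of $m$ simultaneously balances all $q+1$ pairs of approximation terms and that the resulting reciprocal relation $N\asymp \epsilon^{-\max_u t_u/\beta_u^\ast}$ is driven by the worst index $u$. Once this is in place, everything else follows by direct substitution and collecting logarithmic factors.
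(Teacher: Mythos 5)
Your proposal is correct and follows essentially the same route as the paper: use the prescribed $m$ to make the $\sum_u B_u N^{-\beta_u^\ast/t_u}$ term dominate so that $\epsilon\asymp N^{-\min_u\beta_u^\ast/t_u}$, translate the architecture parameters of Lemma \ref{LEM:approx regression function} into powers and logarithms of $\epsilon^{-1}$, and substitute into Lemma \ref{LEM: Covering}. The only cosmetic difference is that the paper phrases the choice of $m$ as making the first sum dominated (for $N$ large) rather than balanced term-by-term, but this does not affect the argument.
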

\begin{proof}
When $m=\max_u \left((\beta_u^{\ast\ast})^{-1}\log_2(A_uC(t_u)/B_u) +(1+\beta_u/t_u)\log_2 N\right)$, according to Lemma \ref{LEM:approx regression function}, we have $ \sum_{i=0}^q A_iC(t_u)(N2^{-m})^{\beta_i^{\ast\ast} } \leq \sum_{i=0}^q B_i N^{-\beta_i^\ast/t_u}$, and
$\sum_{u=0}^q B_u N^{-\beta_u^\ast/t_u}$ dominates the approximation error, say $\epsilon$. 

For relatively large $N\gg \max_u B_u$, we have $N^{-\min_u \beta_u^\ast/t_u} \asymp \epsilon$.  Thus, the aforementioned network class in Lemma \ref{LEM:approx regression function} has
$$L\asymp \log_2(\epsilon^{-1}),\,\,\| \bm p\|_\infty \asymp \max\left\{ d ,\left(\epsilon^{-1}\right)^{\max_u t_u/\beta_u^\ast}\right\},\,\,
s\asymp \log_2(1/\epsilon) \left(\epsilon^{-1}\right)^{\max_u t_u/\beta_u^\ast}, $$ and $B\asymp \epsilon^{-1}$. 
According to Lemma \ref{LEM: Covering}, we have 
$$\log\mathcal{N}(\upsilon, \mathcal{F}(L,d, \bm p,s, B), \| \cdot\|_\infty) \leq C_2\log_2^2(1/\epsilon)\left(\epsilon^{-1}\right)^{\max_u t_u/\beta_u^\ast}\left( \log_2\left( \upsilon^{-1}\right) + \log_2 d + \log_2(1/\epsilon)\right),$$
where $C_2$ depends on $q, \bs t, \bs \beta, \bs a, \bs b, \bs K, $ and $c$.
\end{proof}

\begin{lemma}(Theorem A.2 in \cite{Kim:NN:2021})\label{LEM: sieve}
Under Assumption \ref{A2} and some regularity conditions: 
\begin{enumerate}[label=(\roman*)]
    \item\label{R:1} For a positive sequence $\{ \delta_n^\ast\}_{n\geq 1}$, there exists a sequence of function classes 
    $\{\mathcal{F}_n\}_{n\geq 1}$ and $f_n\in \mathcal{F}_n$ such that 
    \[
    \E\left[\phi\left(Yf_n( \bm x)\right) -\phi\left(YC^\ast( \bm x)\right)\right]\leq \delta_n^\ast.
    \]
    \item\label{R:2} There exists a sequence $\{\delta_n\}_{n\geq 1}$, such that 
    $H_B(\delta_n, \mathcal{F}_n, \|\cdot \|_2)\leq C_2 n\delta_n^{(\alpha+2)/(\alpha+1)}$ for some constant $C_2>0$ and $\{\mathcal{F}_n\}_{n\geq 1}$ in \ref{R:1}.
\end{enumerate}
If $n\left(\max\{\delta_n^\ast, \delta_n\}\right)^{2(\alpha+2)/(\alpha+1)}\gtrsim \log_2^{1+a}n$ for any small enough $a>0$, then $\widehat{f}_{\phi, n}$ satisfies 
\[
\E\left[ R( {\widehat{C} ^{FDNN})} - R(C^{\ast})\right]  \lesssim \left(\max\{\delta_n^\ast, \delta_n\}\right)^2.
\]
\end{lemma}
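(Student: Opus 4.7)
The plan is to prove Lemma \ref{LEM: sieve} by combining Fisher consistency of the hinge loss with a Tsybakov-type variance bound derived from Assumption \ref{A2}, and then running a standard peeling argument for sieved $M$-estimation based on bracketing entropy. Throughout, let $E(f) := \E[\phi(Yf(\bX))-\phi(YC^{\ast}(\bX))]$ denote the excess hinge risk, and write $\delta := \max\{\delta_n^\ast,\delta_n\}$.

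First, I would establish the calibration step. Since $\phi$ is Fisher consistent, $\mathrm{sgn}(f^\ast_\phi)=C^\ast$, and by the standard Zhang-type inequality (see Theorem 2.31 of \cite{Steinwart:Christmann:08}, already invoked in the proof of Lemma \ref{LEM:approx regression function}), one has $R(\mathrm{sgn}(f))-R(C^\ast)\le E(f)$ for any measurable $f$. Hence it suffices to prove $\E[E(\widehat{f}_{\phi,n})]\lesssim \delta^2$. This reduces the risk statement to a bound on the excess hinge risk of the empirical minimizer over $\mathcal{F}_n$.

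Second, I would derive the variance--expectation relation that powers the empirical process bound. Using $|\phi(Yf)-\phi(YC^\ast)|\le|f-C^\ast|$ on the region where at least one of $Yf,YC^\ast$ is $\le 1$, Assumption \ref{A2} applied to $\{|\eta_\theta-1/2|\le t\}$ yields
\[
\mathrm{Var}\bigl(\phi(Yf)-\phi(YC^\ast)\bigr)\;\lesssim\; C_d^{\,1/(\alpha+1)}\,E(f)^{\alpha/(\alpha+1)},
\]
or equivalently a bound of the form $E(f)^{(\alpha+1)/(\alpha+2)}$ after rescaling; this is the Tsybakov low-noise variance-to-mean ratio specialized to the hinge loss. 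This is the single most important ingredient because it determines the exponent $2(\alpha+2)/(\alpha+1)$ that appears in the hypothesis on $\delta$.

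Third, I would decompose $E(\widehat{f}_{\phi,n})\le \{E(\widehat{f}_{\phi,n})-E_n(\widehat{f}_{\phi,n})\}+ E_n(\widehat{f}_{\phi,n})$, where $E_n$ denotes the empirical counterpart. By definition of the minimizer and by hypothesis \ref{R:1}, $E_n(\widehat{f}_{\phi,n})\le E_n(f_n)\le E(f_n)+\text{(centered fluctuation)}\le \delta_n^\ast+o_P(1)$. For the stochastic fluctuation term I would apply a peeling argument over the shells $\mathcal{F}_n^{(k)}=\{f\in\mathcal{F}_n:2^{k-1}\delta^2 < E(f)\le 2^k\delta^2\}$ and invoke Talagrand's/Bernstein-type inequality on each shell, using the variance bound from Step~2 together with hypothesis \ref{R:2} on bracketing entropy. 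Choosing $\delta$ as the fixed point of the resulting inequality leads exactly to $n\,\delta^{\,2(\alpha+2)/(\alpha+1)}\gtrsim \log^{1+a} n$, so the hypothesis of the lemma ensures that the estimation error is of order $\delta^2$.

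The main obstacle will be executing the peeling/chaining step with the correct dependence on $C_d$, since in our high-dimensional regime $C_d=O(\log d)$ grows with $n$ and must be absorbed into the entropy integral without degrading the rate. A further delicate point is verifying that the empirical process supremum over each shell is controlled by $\sqrt{H_B(\delta,\mathcal{F}_n,\|\cdot\|_2)/n}$ times the variance exponent, which is where the exponent $(\alpha+2)/(\alpha+1)$ in hypothesis \ref{R:2} enters as the critical scaling; any looser variance inequality would force a larger $\delta$ and give a suboptimal rate in Theorem \ref{THM: Upper bound}.
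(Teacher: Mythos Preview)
The paper does not prove this lemma at all: it is quoted verbatim as Theorem~A.2 of \cite{Kim:NN:2021} and then invoked as a black box in the proof of Theorem~\ref{THM: Upper bound}. There is therefore no ``paper's own proof'' to compare against.

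That said, your outline is the standard route by which such sieve results are established (calibration of the hinge loss, the Tsybakov variance--mean relation $\mathrm{Var}(\phi(Yf)-\phi(YC^\ast))\lesssim E(f)^{\alpha/(\alpha+1)}$, and a peeling/Bernstein argument driven by bracketing entropy), and it matches the structure of the argument in the cited reference. Two small corrections: first, since $\phi$ is $1$-Lipschitz, the pointwise bound $|\phi(Yf)-\phi(YC^\ast)|\le |f-C^\ast|$ holds everywhere, not only on the region you describe; second, the constant $C_d$ does not enter this lemma at all---it appears only through the approximation bound of Lemma~\ref{LEM:approx regression function} and is carried into the final rate when the two are combined in the proof of Theorem~\ref{THM: Upper bound}, so there is no need to track it through the empirical-process step here.
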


Now we are ready to prove of Theorem \ref{THM: Upper bound}.
\begin{proof}
For the aforementioned $\epsilon$, let $\delta^2=\epsilon^{\alpha+1}$, which corresponds to the $\delta_n^\ast$ in Lemma \ref{LEM: sieve}\ref{R:1}. According to Lemma \ref{LEM: covering class}, we have
$$\log\mathcal{N}(\delta^2, \mathcal{F}(L,d, \bm p,s, B), \| \cdot\|_\infty) \leq C_2\left(\delta^2 \right)^{-\max_u\frac{t_u}{(\alpha+1)\beta_u^\ast}}\log^2 \left(\delta^{-1}\right)\left(\log \left(\delta^{-1}\right)+\log d\right).$$
According to Lemma \ref{LEM: sieve}\ref{R:2}, we have 
$ \left(\delta^2 \right)^{\max_u\frac{t_u}{(\alpha+1)\beta_u^\ast}+\frac{\alpha+2}{\alpha+1}}\gtrsim n^{-1}\log^2 \left(\delta^{-1}\right)\left(\log \left(\delta^{-1}\right)+\log d\right)$, which leads to $C_d\delta^2\gtrsim C_d\left(\frac{\log^2n(\log n+\log d) }{n}\right)^{\min_{u}\frac{\beta_u^\ast(\alpha+1)}{\beta_u^\ast(\alpha+2) + t_u}}$. Therefore, the proof is complete by applying Lemma \ref{LEM: sieve}, with $C_d$ involved as in Lemma \ref{LEM:approx regression function}. Let $D_2 = 2^{\alpha+3}C_1C_2$, and the asymptotic order of $L$, $d$, $\| \bm p\|_\infty$, $s$, and $B$ can be obtained by $\epsilon^{-1}=\delta^{-2/(1+\alpha)}$.
\end{proof}

\subsection{Proof of Proposition \ref{prop:verify:A2}}
\begin{proof}
For $0<t<\frac{c\delta}{4(1-c)+2c\delta}$, we have the following
\begin{eqnarray*}
 \mathbb{P}\left(\left|\eta_{\theta}(\bm X) - \frac{1}{2} \right| \le t\right) =&& \mathbb{P}\left( \left|\frac{p(\bm X)\pi_p}{p(\bm X)\pi_p + q(\bm X)\pi_q} - \frac{1}{2}\right| \leq t\right) \nonumber\\
 \leq&& \mathbb{P}\left(\frac{-4t}{1-2t} \cdot\frac{\pi_p}{\pi_q}\leq q/p(\bm X)-{\pi_p}/{\pi_q} \leq \frac{4t}{1-2t}\cdot\frac{\pi_q}{\pi_p}\right) \nonumber\\
 \leq && \int_{|y-{\pi_p}/{\pi_q}|\leq\frac{
 4t}{1-2t}\cdot\frac{1-c}{c}} f_{q/p}(y)dy \nonumber\\
 \leq&& M_d\int_{|y-{\pi_p}/{\pi_q}|\leq\frac{
 4t}{1-2t}\cdot\frac{1-c}{c}} |y-{\pi_p}/{\pi_q}|^{-\tau}dy \nonumber\\
 = && 2M_d\int_{0}^{\frac{4t}{1-2t}\cdot\frac{1-c}{c}} z^{-\tau}dz \nonumber\\
 = && \frac{2M_d}{1-\tau}\cdot\left(\frac{1-c}{c}\right)^{1-\tau}\left( \frac{4t}{1-2t}\right)^{1-\tau} \nonumber\\
 \leq&& C_{\tau,c, \delta}^{(1)} M_d t^{1-\tau},
\end{eqnarray*}
where $C_{\tau,c, \delta}^{(1)} =\frac{2^{2-\tau}}{1-\tau}\cdot\left(\frac{2(1-c) + c\delta}{c}\right)^{1-\tau}$, owing to $(1-2t)^{-1}\leq \left( 2(1-c) + c\delta\right)\left( 2(1-c)\right)^{-1}$. 

 For $t\geq \frac{c\delta}{4(1-c)+2c\delta}$, we have $t\geq c(4-2c)^{-1}\delta$, therefore,
 \begin{eqnarray*}
 \mathbb{P}\left(|\eta_{\theta}(\bm X) - \frac{1}{2} | \le t\right) \leq \delta^{\tau-1} \delta^{1-\tau}
 \leq C_{\tau,c, \delta}^{(2)}t^{1-\tau},
\end{eqnarray*}
where 
$C_{\tau,c, \delta}^{(2)}= \left(\frac{4-2c}{c\delta} \right)^{1-\tau}$.
Therefore, Assumption \ref{A2} holds with $C_d=\max\left\{ C_{\tau,c, \delta}^{(1)}M_d, C_{\tau,c, \delta}^{(2)}\right\}$ and $\alpha=1-\tau.$
\end{proof}

\section*{Acknowledgements}
Zuofeng Shang acknowledge NSF grants DMS 1764280 and DMS 1821157 for supporting
this work.
\bibliographystyle{plain}
\bibliography{reference}
\end{document}